\def\eps{\varepsilon}
\def\be{\beta}
\newcommand{\f}{\frac}
\newcommand{\ob}[2]{\overbrace{#1}^{\text{\hbox to 0cm{\hss #2 \hss}}}}
\newcommand{\ub}[2]{\underbrace{#1}_{\text{\hbox to 0cm{\hss #2 \hss}}}}
\def\lr#1#2#3{
    \left#1 #3 \right#2
}
\def\Ro{\mathcal{R}_0}
\def\Celsius{$^{\circ}$C}
\newcommand{\tSup}[1]{$^{#1}$}
\newcommand{\Q}{{\cal Q}_0}
\newcommand{\setaEsq}{\tikz \draw[-triangle 90] (0,0) -- +(-.1, .1);}
\newcommand{\setaDir}{\tikz \draw[-triangle 90] (0,0) -- +( .1,-.1);}
\newcommand{\retas}[2]{
    \draw[thick,gray] (#1-.3/4,#1-.5/4)  -- node{\setaEsq}
                      (#2,0);
    \draw[thick,gray] (#1-.3/4,#1-.5/4)  -- node{\setaDir}
                      (0,#2);}
  \def\figEum{\scalefont{0.7}
    \begin{tikzpicture}\phantom{$v^*$}
      \def\figWidth{4.5}
      \coordinate (1) at (0,0);
      \coordinate (2) at (\figWidth,0);
      \coordinate (3) at (\figWidth,\figWidth);
      \draw[thick,->] (1)  -- (\figWidth,0) node(\figWidth,0)[anchor=west]{$v^*$};
      \draw[thick,->] (1)  -- (0,\figWidth) node(0,\figWidth)[anchor=east]{$u^*$};
      \draw[thick] (1) -- (4.3-.3,4.3-.5) node[anchor=east]{$u^*=\f{\gamma v^*}{k\mu_1}\ $};
      \retas{.95}{1.7}
      \retas{1.6}{3}
      \retas{2.35}{4.4}
    \end{tikzpicture}
  }
\newcommand{\ds}{\displaystyle }
\newtheorem{definition}{Definition}
\newtheorem{theorem}{Theorem}
\newtheorem{lemma}{\bf Lemma}
\newtheorem{proposition}{Proposition}
\newtheorem{remark}{\bf Remark}
\newcommand{\p}{\partial}
\newcommand{\bb}{\begin{equation}}
\newcommand{\ee}{\end{equation}}
\newcommand{\ba}{\begin{array}}
\newcommand{\ea}{\end{array}}
\newcommand{\R}{\mathbb{R}}
\begin{document}
\pagenumbering{arabic}
\title{\huge \bf Mathematical modelling for the transmission of dengue: symmetry and traveling wave analysis}
\author{\rm \large Felipo Bacani$^1$, Stylianos Dimas$^2$, Igor Leite Freire$^2$, Norberto Anibal Maidana$^2$ and Mariano Torrisi$^3$ \\
\\
\it $^1$Instituto de Ciências Exatas e Aplicadas,\\
\it Universidade Federal de Ouro Preto - UFOP\\
    \it Rua Trinta e Seis, 
Loanda, $35931-008$ Jo\~ao Monlevade, MG - Brazil\\
    \rm E-mail: fbacani@decea.ufop.br\\
    \\
\it $^2$ Centro de Matem\'atica, Computa\c c\~ao e Cogni\c c\~ao\\ \it Universidade Federal do ABC - UFABC\\ \it 
Rua Santa Ad\'elia, $166$, Bairro Bangu,
$09.210-170$\\\it Santo Andr\'e, SP - Brasil\\
\rm E-mails: s.dimas@ufabc.edu.br, igor.freire@ufabc.edu.br/igor.leite.freire@gmail.com, \\norberto.maidana@ufabc.edu.br\\
\\
\it $^3$Dipartimento di Matematica e Informatica,
      \it  Universit\`a Degli Studi di Catania,\\
      \it Viale Andrea Doria, 6,
      95125    Catania, Italia;\\ 
      \rm E-mail\textbf{s}: torrisi@dmi.unict.it / m.torrisi12@gmail.com }
\date{\ }
\maketitle
\vspace{1cm}
\begin{abstract}
In this paper we propose some mathematical models for the transmission of dengue using a system of reaction-diffusion equations. The mosquitoes are divided into infected, uninfected and aquatic subpopulations, while the humans, which are divided into susceptible, infected and recovered, are considered homogeneously distributed in space and with a constant total population. We find Lie point symmetries of the models and we study theirs temporal dynamics, which provides us the regions of stability and instability, depending on the values of the basic offspring and the basic reproduction numbers. Also, we calculate the possible values of the wave speed for the mosquitoes invasion and dengue spread and compare them with those found in the literature.

\end{abstract}
\vskip 1cm
\begin{center}
{2010 AMS Mathematics Classification numbers: 34D20, 35B35, 76M60, 92Bxx 
\vspace{0.2cm}\\
Key words: mathematical modelling, dengue, Lie symmetries, qualitative analysis, applied mathematics}
\end{center}
\pagenumbering{arabic}
\newpage
 
\section{Introduction}

The {\it Aedes aegypti} mosquito is a well known vector for the transmission of diseases to humans such as {\it dengue} and {\it Zika}, to name a few. Until 2015 these mosquitoes were mostly related to the transmission of dengue. However, evidence suggests that after 2014 FIFA World Cup tournament, Zika virus arrived at South America, finding in Brazil an ideal habitat to grow: a tropical climate, significantly higher population density and an efficient vector for transmission: {\it Aedes aegypti} \cite{ai,butler}. Zika usually causes mild symptoms in most people infected by it. In spite of everything,   new data gathered since the end of 2015  from women that got infected -- while they where on the last months of their pregnancy -- supported the suspicion that Zika is related to microcephaly, a medical condition where the baby's brain does not develop properly.

To the best of our knowledge no mathematical models of Zika have been proposed or validated so far \cite{howard}.  On the contrary,  things are quite different with dengue. For dengue, Aedes mosquito is the primary vector of transmission, and therefore, the study of its dynamics is very important as it permits the determination of the efficacy of different ways of controlling the mosquitoes populations. Furthermore, as a mosquito becomes a carrier of the virus only by biting an already infected human, the transmission can be fully understood only by taking also into account the human populations. On the other hand, for Zika,  this is only one of the possible ways of transmission since it can also be transmitted through other ways \cite{howard}.  Nevertheless, the study of dengue's transmission may be useful not only for its own sake, but it can also enlighten and provide insights and inspiration to the mathematical understanding of Zika too.

Our paper is concerned with the mathematical modelling for transmission of dengue. In section \ref{models} we propose Malthusian models taking into account a division of human population into three groups (SIR classification): susceptible, infectious and recovered, while the mosquitoes are divided into female winged non-infected and infected, and aquatic sub-populations.

To have a picture of some mathematical features of the biological constitutive parameters of the models considered we look for some point symmetries of the models in section \ref{symmetries}. Next, in the section \ref{spatial dynamics} we consider the temporal dynamics of the models. This enables us to determine the equilibrium points of the systems and determine whether these points are stable or not. In section \ref{speed}, using the invariance under space and time translations, we determine the wave speed for the mosquitoes' invasion and dispersion. To determine these values we made use of the data used in \cite{maidana2008}. Finally, discussions and conclusions are presented in section \ref{conclusion}.

\section{The models}\label{models}

We start by introducing the models for transmission of dengue relating humans and {\it Aedes} {\it aegypti} mosquitoes dynamics. 

The human population is divided into three sub-populations: susceptible, infected and recovered individuals at a time $t$ and a position $x$. The corresponding density functions are denoted by $\bar{h}(x,t)$, $\bar{I}(x,t)$ and $\bar{r}(x,t)$, respectively. By $\bar{N}(x,t)$ we designate the total human population, that is $\bar{N}(x,t)=\bar{h}(x,t)+\bar{I}(x,t)+\bar{r}(x,t)$.

The mosquitoes' population is also divided into three: winged non-infected $\bar{u}(x,t)$ and infected $\bar{w}(x,t)$ and aquatic $\bar{v}(x,t)$. The latter population includes the eggs, larvae and pupae stages of {\it Aedes} life cycle. The total winged mosquito population is denoted by $\bar{M}(x,t)$.

The biological parameters used in our models are presented on the Table \ref{tab1}.
\\
{\scalefont{0.8}
\begin{longtable}{|l|l|}
\caption{Biological parameters used for modelling.}\\
\hline\label{tab1}
Parameter          &     Biological meaning        \\\hline
 $\bar{\nu}$          & advection coefficient  \\\hline
 
 $\bar{r}_0$          & intrinsic oviposition rate  \\\hline
 
  $\bar{k}_1$          & carrying capacity regarding the winged mosquitoes form  \\\hline
 
 $\bar{k}_2$          & carrying capacity regarding the aquatic mosquitoes form   \\\hline
 
  $\bar{\gamma}$          & rate of maturation from the aquatic form of mosquitoes to winged form  \\\hline

 $\bar{\mu}_1$          & mortality rate of the winged mosquito sub-population  \\\hline
 
 $\bar{\mu}_2$          & mortality rate of the aquatic mosquito sub-population  \\\hline
 
  $\bar{\mu}_3$          & mortality rate of human population  \\\hline
 
 $\bar{\be}_1$          & transmission coefficient from humans to mosquitoes  \\\hline
 
 $\bar{\be}_2$          & transmission coefficient from mosquitoes to humans  \\\hline
 
  $\bar{\sigma}$          & recovery rate from disease  \\\hline
  $\bar{D}$   & diffusion coefficient, which may depend on the winged population\\\hline
\end{longtable}}

\subsection{Previous models}
Here we recall previous models that influenced this study. 

\subsubsection{{\it Aedes aegypti} population models}

As in the model proposed in \cite{maidana2005}, here we consider only two sub-populations: the winged form, comprised of mature female mosquitoes, and an aquatic sub-population, including eggs, larvae and pupae. The spatial density of the winged population is $\bar{M}(x,t)=\bar{u}(x,t)$ and the aquatic sub-population is $\bar{v}(x,t)$. The rate of maturation from the aquatic form to the winged one, denoted by $\bar{\gamma}$,  is satured by the carrying capacity $\bar{k}_1$, given by $\bar{\gamma}\bar{v}(x,t)\left(1-\bar{u}(x,t)/\bar{k}_1\right)$.

On the other hand, the rate of oviposition is proportional to the density of female mosquitoes, but it is also dependent on the availability of breeders, given by $\bar{r}_0\bar{u}(x,t)\left(1-\bar{v}(x,t)/\bar{k}_2\right)$.
Therefore, considering the parameters $(\bar{\nu}, \bar{\mu}_1,\bar{\mu}_2,\bar{k}_1,\bar{k}_2,\bar{r}_0)$ and the diffusion $\bar{D}=\bar{D}(u)$, one has the following mathematical model for the vital dynamics and dispersal process of mosquitoes:
\bb\label{2.2.1.1}
\left\{\ba{l}
\bar u_{\bar t}=(\ub{\bar D (u)\,\bar u_{\bar x})_{\bar x}}{diffusion}
         -\ob{\bar \nu\,\bar u _{\bar x}}{transport}
         +\ub{\bar \gamma\,\bar v\left(1-\f{\bar u}{k_{1}}\right)
             -\bar \mu_{1}\bar u}
             {birth/death},\\
             \\
             
\bar v_{\bar t}=\ob{\bar r_0\left(1-\f{\bar v}{k_2}\right)\bar M}
                  {oviposition}
         -\ob{(\bar \mu_{2}+\bar \gamma)\bar v}
             {\qquad lost eggs/eclosion}.
\ea\right.
\ee

Assuming that $\bar{D}(u)=\bar{D}$ is a constant, under the suitable non-dimensional transformation
\bb\label{2.2.1.2}
\ba{l}\ds{
   u=\f{\bar u}{k_1},\,\,
    v=\f{\bar v}{k_2},\,\,
    t=\bar t\cdot \bar r_0,\,\,
    x=\f{\bar x}{\sqrt{\bar D/\bar r_0}}}\,\,
   \ds{ \mu_1 = \f{\bar \mu_1}{\bar r_0},\,\,
    \mu_2 = \f{\bar \mu_2}{\bar r_0},\,\,
    \gamma= \f{\bar \gamma}{\bar r_0},\,\,
    \nu   = \f{\bar \nu}{\sqrt{\bar r_0\bar D}},\,\,
    k     = \f{k_1}{k_2},  }
\ea
\ee
one can transform system (\ref{2.2.1.1}) into
\bb\label{2.2.1.3}
\left\{
\ba{lcl}
u_{t}&=&\ds{u_{xx}-\nu u_{x}+\f{\gamma}{k}v(1-u)-\mu_{1}u},\\
\\
v_{t}&=&\ds{k(1-v)u-(\mu_{2}+\gamma)v}.
\ea\right.
\ee

If one assumes nonlinear effects in the diffusion (in section \ref{new} we shall revisit this point) of the type $\bar{D}(u)\sim\bar{u}^p$, it is induced a nonlinear effect on the transport. Then it may be of interest the addition of nonlinear effects in both diffusion and transport. These nonlinear effects can be considered in (\ref{2.2.1.3}) by making the changes $u_{xx}\mapsto (u^pu_{x})_{x}$ and $\nu u_x\mapsto 2\nu u^q u_x$, respectively, where $p$ and $q$ are arbitrary parameters. Additionally, if one removes the species' self-regulation term $uv$ in (\ref{2.2.1.3}) and add $(\gamma/k)u$ (for further details, see \cite{frema2013}), one removes the mosquitoes' saturation. Thus, the following model is obtained:
\bb\label{2.2.1.4}
\left\{
\ba{l}
\ds{u_{t}=(u^{p}u_{x})_x -2\nu u^{q}u_{x} +\f{\gamma}{k}v+(\f{\gamma}{k}-\mu_{1})u},\\
\\
\ds{v_{t}=ku+(k -\mu_{2}-\gamma)v}.
\ea\right.
\ee

\subsubsection{Model for transmission of dengue to humans via {\it Aedes aegypti}}

In \cite{maidana2008}, assuming a constant diffusion  for transmission of dengue and taking human and mosquito populations into account, the following model has been proposed:
\bb\label{2.2.2.1}
\left\{\ba{l}
\bar u_{\bar t}=\ub{\bar D\,\bar u_{\bar x\bar x}}{diffusion}
         -\ob{\bar \nu\,\bar u _{\bar x}}{transport}
         +\ub{\bar \gamma\,\bar v\left(1-\f{\bar M}{k_{1}}\right)
             -\bar \mu_{1}\bar u}
             {birth/death}
         -\ob{\bar \beta_{1}\bar u \bar I}
             {infection human$ \rightarrow $mosquito},\\
             \\
             
\bar w_{\bar t}=\bar D\,\bar w_{\bar{xx}}
         -\bar \nu \,\bar w_{\bar x}
         -\bar \mu_{1}\,\bar w+\bar \beta_{1}\bar u \bar I,\\
         \\
\bar v_{\bar t}=\ob{\bar r_0\left(1-\f{\bar v}{k_2}\right)\bar M}
                  {oviposition}
         -\ob{(\bar \mu_{2}+\bar \gamma)\bar v}
             {\qquad lost eggs/eclosion},\\
             \\
\bar h_{\bar t}=\ub{\bar \mu_3\bar N
                  -\bar \mu_3\bar h}{birth/death}
              -\ob{\bar \beta_{2}\bar h\bar w}{infection mosquito$ \rightarrow $human},\\
              \\
\bar I_{\bar t}=\bar \beta_{2}\bar h\bar w
               -\ub{\bar \sigma \bar I}{natural recovery}
               -\ob{\bar \mu_3\bar I}{mortality},\\
         \\
\bar r_{\bar t}=\bar \sigma \bar I
              -\bar \mu_3\bar r.
\ea\right.
\ee
The last three equations of the previous system yields (see \cite{maidana2008})
$$\frac{\partial \bar N}{\partial \bar t}\equiv\frac{\partial\bar h}{\partial\bar t} +\frac{\partial \bar I}{\partial \bar t}+\frac{\partial \bar r}{\partial\bar t}=0$$
which implies on the constancy of the human population, although each sub-population may vary, {\it e.g}, due to natality, mortality or other events. Therefore $\bar N = \bar N_0$ is a constant, where $\bar N_0$ is the population at $t=0$.
\par
By using (\ref{2.2.1.2}) together with
\bb\label{2.2.2.2}
\ba{l}
{\ds   w=\f{ \bar{w}}{k_1},\,\,
    h=\f{\bar{h}}{\bar N},\,\,
    I=\f{\bar{I}}{\bar N},\,\,
   r=\f{ \bar{r}}{\bar N}},\,\,
{\ds    \beta_1 = \f{\bar \beta_1\,\bar N}{\bar r_0},\,\,
    \beta_2 = \f{\bar \beta_2\,k_1}{\bar r_0},\,\,
    \mu_3   = \f{\bar \mu_3}{\bar r_0},\,\,
    \sigma  = \f{\bar \sigma}{\bar r_0}}
\ea
\ee
 the system is put (\ref{2.2.2.1}) in the non-dimensional form
\bb\label{2.2.2.3}
\left\{\ba{l}
u_{t}=u_{xx}-\nu u_{x}+\f{\gamma}{k}v\left(1-M\right) -\mu_{1}u-\beta_{1}u I,\\
\\
w_{t}=w_{xx}-\nu w_{x}-\mu_{1}w+\beta_{1}u I,\\
\\
v_{t}=k(1-v)M -(\mu_{2}+\gamma)v,\\
\\
h_{t}=(1-h)\mu_3-\beta_{2}hw,\\
\\
I_{t}=\beta_{2}hw-\sigma I-\mu_3 I,\\
\\
r_{t}=\sigma I-\mu_3 r,
\ea\right.
\ee
and, in particular, $ h+I+r=1$.
\subsection{New models}\label{new}

Differently from \cite{maidana2005,maidana2008}, in what follows we assume that the diffusion of the winged population is dependent on the density $\bar{M}$. Actually, we make the assumption $\bar{D}\propto M^p$, where $p$ is a parameter. Nonlinearities in the diffusion may be of particular interest in phenomena in which population density is relevant. A typical dependence on the population density is given by
\bb\label{2.3.1}
D(\bar{M})=\bar{D}_0\left(\f{\bar{M}}{M_0}\right)^p,
\ee
where $\bar{D}_0$ is a constant and $M_0$ is usually understood as the carrying support capacity of the population or the initial population. Usually $p\geq0$, since with this choice we have $d\bar{D}/dM\geq0$, which implies that the diffusion increases with the population. However, in this paper we do not impose such a restriction, leaving the parameter $p$ arbitrary, which leads us to a richer mathematical problem. We would like to note here that the diffusion of insects is an important phenomena yet to be fully understood, with only a few works considering nonlinear effects on the diffusion.
 
 By assuming a nonlinear diffusion, nonlinear effects of the type $M^{p-1}M_uu_x$ and $M^{p-1}M_w w_x$ may contribute to the advection. Therefore, we also assume that the advection terms depend on power nonlinearities of the populations. This is a mathematical assumption. On the other hand, it is worth noticing that we shall carry out a symmetry classification of the models proposed in the next section. Symmetry classifications of systems with several parameters, as is our case, may be influenced by certain constraints involving the parameters of the equations under consideration. Quite frequently, the special cases appearing during the classification of symmetry groups have significance in the physical process involved, see, for instance, \cite{olver}, exercise 2.18. For this reason we add these nonlinearities {\it a priori} in the models, leaving a possible interpretation {\it a posteriori}, after the symmetries are found.

\begin{enumerate}
\item {\bf Model 1}: Making in (\ref{2.2.2.1}) the corresponding modifications applied to (\ref{2.2.1.1}) in order to obtain (\ref{2.2.1.4}) and invoking (\ref{2.2.1.2}) and (\ref{2.2.2.2}), one has
\bb\label{2.3.2}
\left\{\ba{l}
\ds{u_{t}=\left(M^pu_{x}\right)_x -2\nu u^{q_{1}}u_{x} +\f{\gamma}{k}v +\left(\f{\gamma}{k}-\mu_{1}\right)u-\beta_{1} u I,}\\
\\
\ds{w_{t}=\left(M^p w_{x}\right)_x -2\nu w^{q_{2}}w_{x}+\left(\f{\gamma}{k}-\mu_{1}\right)w+\beta_{1}u I,}\\
\\
v_{t}= k M +(k -\mu_{2}-\gamma)v,\\
\\
h_{t}=(1-h)\mu_3-\beta_{2}hw,\\
\\
I_{t}=\beta_{2}hw-\sigma I-\mu_3I,\\
\\
r_{t}=\sigma I-\mu_3r.
\ea\right.
\ee
System (\ref{2.3.2}) can be considered a generalisation of the models studied in \cite{frema2013,frema2014}, incorporating humans and splitting the winged population into infected and non-infected.

\item {\bf Model 2}: An additional Malthusian model can be obtained by removing the saturation of mosquitoes and eggs (\ref{2.2.2.3}). This hypothesis yields the following system: 
\bb\label{2.3.3}
\left\{\ba{l}
{\ds u_{t}=\left(M^pu_{x}\right)_x-2\nu u^{q_{1}}u_{x}+\f{\gamma}{k}v-\mu_{1}u-\beta_{1} u I},\\
\\
\ds{w_{t}=\left(M^p w_{x}\right)_x-2\nu w^{q_{2}}w_{x}-\mu_{1}w+\beta_{1}u I,}\\
\\
v_{t}= k M -(\mu_{2}+\gamma)v,\\
\\
h_{t}=(1-h)\mu_3-\beta_{2}hw,\\  
\\
I_{t}=\beta_{2}hw-\sigma I-\mu_3I,\\
\\
r_{t}=\sigma I-\mu_3r.
\ea\right.
\ee
\end{enumerate}

Quite interesting, systems (\ref{2.3.2}) and (\ref{2.3.3}) are members of the family of systems
\bb\label{2.3.4}
\left\{\ba{l}
\Delta_1:=\ds{u_{t}-\left(M^pu_{x}\right)_x+2\nu u^{q_{1}}u_{x}-\f{\gamma}{k}v-(\epsilon\f{\gamma}{k}-\mu_{1})u+\beta_{1} u I}=0,\\
\\
\Delta_2:=\ds{w_{t}-\left(M^p w_{x}\right)_x+2\nu w^{q_{2}}w_{x}-(\epsilon\f{\gamma}{k}+\mu_{1})w-\beta_{1}u I}=0,\\
\\
\Delta_3:=v_{t} - k M +(\mu_{2}+\gamma-\epsilon k)v=0,\\
\\
\Delta_4:=h_{t}-(1-h)\mu_3+\beta_{2}hw=0,\\  
\\
\Delta_5:=I_{t}-\beta_{2}hw+\sigma I+\mu_3I=0,\\
\\
\Delta_6:=r_{t}-\sigma I+\mu_3r=0.\\
\ea\right.
\ee



\section{Lie symmetries of the system (\ref{2.3.4})}\label{symmetries}

In this section we investigate point symmetries of the system (\ref{2.3.4}). As already mentioned, such analysis in mathematical models with several parameters usually reveals those who are really important. In our case, it may enlighten our knowledge on the biological parameters and this is of interest for mathematical studies of some system of type (\ref{2.3.4}). Moreover they are useful to understand how the mathematical structure of the system could be modified in order to improve the fitting of the model with the real phenomenon. 

Although it may not have biological meaning for all values of $\epsilon$, system (\ref{2.3.4}) contains both systems (\ref{2.3.2}) and (\ref{2.3.3}) as members, and from the point of view of Lie symmetries the effort for determining the invariance group of either (\ref{2.3.2}) or (\ref{2.3.3}) is the same of (\ref{2.3.4}). Therefore, here,  we focus our attention in  (\ref{2.3.4}).

We shall proceed in the following way: first we give a short overview on Lie point symmetries and then, we find symmetries of (\ref{2.3.4}).

\subsection{Lie point symmetries}

\begin{definition}
	A continuous one-parameter (local) Lie group of transformations is a family $G$ 
	\begin{equation}\label{contT}
		T_{\hat\epsilon}:=\left\{
		\begin{array}{l}
		x^*=x^{*}(x,t,u,w,v,h,I,r,\hat\epsilon ),
		\,\, 
		t^*=t^{*}(x,t,u,w,v,h,I,r,\hat\epsilon),
		\\ 
		u^*=u^{*}(x,t,u,w,v,h,I,r,\hat\epsilon),
		\,\,
		w^*=w^{*}(x,t,u,w,v,h,I,r,\hat\epsilon),
		\\ 
		v^*=v{*}(x,t,u,w,v,h,I,r,\hat\epsilon),
		\,\,
		h^*=h^{*}(x,t,u,w,v,h,I,r,\hat\epsilon),
		\\ 
		I^*=I^{*}(x,t,u,w,v,h,I,r,\hat\epsilon),
		\,\,
		r^*=r^{*}(x,t,u,w,v,h,I,r,\hat\epsilon),	
		\end{array}\right.
		\end{equation}
	which is locally a $C^{\infty}$-diffeomorphism in a subset $S\subseteq \mathbb{R}^{2+6}$  with coordinates  $(x,t,u,w,v,h,I,r)$, depending analytically on
	the parameter $\hat\epsilon$ in a neighbourhood  $D \subseteq \mathbb{R}$  of $\hat\epsilon=0$ and
	reduces to the identity transformation when $\hat\epsilon=0$. A Lie point symmetry for the system  $(\ref{2.3.4})$ is a transformation $(\ref{contT})$ leaving $(\ref{2.3.4})$ invariant.
	
\end{definition}
By expanding with respect to $\hat\epsilon$ around $0$ we get the linear form  of (\ref{contT})
\begin{equation}\label{transfdef}
	\begin{array}{l}
		x^*=x+\hat\epsilon\xi(x,t,u,w,v,h,I,r) +O(\hat\epsilon^2), \,\,
		t^*=t+\hat\epsilon\tau(x,t,u,w,v,h,I,r) +O(\hat\epsilon ^2),\\
		\\
		u^*=u+\hat\epsilon\eta^{1}(x,t,u,w,v,h,I,r)+O(\hat\epsilon ^2),\,\,
		w^*=w+\hat\epsilon\eta^{2}(x,t,u,w,v,h,I,r)+O(\hat\epsilon ^2),\\ 
		\\
				v^*=v+\hat\epsilon\eta^{3}(x,t,u,w,v,h,I,r)+O(\hat\epsilon ^2),\,\,
				h^*=h+\hat\epsilon\eta^{4}(x,t,u,w,v,h,I,r)+O(\hat\epsilon ^2),\\ 
				\\
			I^*=I+\hat\epsilon\eta^{5}(x,t,u,w,v,h,I,r)+O(\hat\epsilon ^2),\,\,
			r^*=r+\hat\epsilon\eta^{6}(x,t,u,w,v,h,I,r)+O(\hat\epsilon ^2),			
	\end{array}
\end{equation}
where
$$ \xi(x,t,u,w,v,h,I,r):=\frac{\partial x^{*}}{\partial \hat\epsilon}\left|_{\hat\epsilon=0}\right.,\quad\quad\quad\tau(x,t,u,w,v,h,I,r):=\frac{\partial t^{*}}{\partial \hat\epsilon}\left|_{\hat\epsilon=0}\right.,$$
$$\eta^{1}(x,t,u,w,v,h,I,r):=\frac{\partial u^{*}}{\partial \hat\epsilon}\left|_{\hat\epsilon=0}\right.,\quad\quad\quad \eta^{2}(x,t,u,w,v,h,I,r):=\frac{\partial w^{*}}{\partial \hat\epsilon}\left|_{\hat\epsilon=0}\right., $$
$$\eta^{3}(x,t,u,w,v,h,I,r):=\frac{\partial v^{*}}{\partial \hat\epsilon}\left|_{\hat\epsilon=0}\right.,\quad\quad\quad \eta^{4}(x,t,u,w,v,h,I,r):=\frac{\partial h^{*}}{\partial \hat\epsilon}\left|_{\hat\epsilon=0}\right., $$
$$\eta^{5}(x,t,u,w,v,h,I,r):=\frac{\partial I^{*}}{\partial \hat\epsilon}\left|_{\hat\epsilon=0}\right.,\quad\quad\quad \eta^{6}(x,t,u,w,v,h,I,r):=\frac{\partial r^{*}}{\partial \hat\epsilon}\left|_{\hat\epsilon=0}\right., $$
allows us to introduce the vector field
\bb\label{1.1}
\ba{lcl}
X&=&\ds{\xi(x,t,u,w,v,h,I,r)\f{\p}{\p x}+\tau(x,t,u,w,v,h,I,r)\f{\p}{\p t}+\eta^1(x,t,u,w,v,h,I,r)\f{\p}{\p u}}\\
\\
&&\ds{+\eta^2(x,t,u,w,v,h,I,r)\f{\p}{\p w}+\eta^3(x,t,u,w,v,h,I,r)\f{\p}{\p v}+\eta^4(x,t,u,w,v,h,I,r)\f{\p}{\p h}}\\
\\
&&\ds{+\eta^5(x,t,u,w,v,h,I,r)\f{\p}{\p I}+\eta^6(x,t,u,w,v,h,I,r)\f{\p}{\p r}}.
\ea
\ee

This operator is usually called infinitesimal generator of the transformation or infinitesimal generator of the Lie point symmetry. Then, given a transformation (\ref{transfdef}), it is possible to obtain the corresponding generator (\ref{1.1}). Vice-versa, given a generator of the type (\ref{1.1}), it is possible to obtain its transformation using the exponential map, that is, the transformation is given by $(x,t,u,w,v,h,I,r)\mapsto (e^{\hat\epsilon X}x,e^{\hat\epsilon X}t,e^{\hat\epsilon X}u,e^{\hat\epsilon X}w,e^{\hat\epsilon X}v,e^{\hat\epsilon X}h,e^{\hat\epsilon X}I,e^{\hat\epsilon X}r)$.
\par
In order to obtain the symmetries of system (\ref{2.3.4}) one should extend the operator (\ref{1.1}) up to second order and then apply the invariance condition (see the well known references  \cite{bk,i2,i1,olver} for further details) that reads:
\begin{equation} \label{INV}
\left\{
\begin{array}{l}
 X^{(2)}\Delta_1=0\Big |_\mathbf{\Delta=0},\quad  X^{(2)}\Delta_2=0\Big |_\mathbf{\Delta=0},\quad  X^{(1)}\Delta_3=0\Big |_\mathbf{\Delta=0},\quad
 \\\\  
 X^{(1)}\Delta_4=0\Big |_\mathbf{\Delta=0},\quad  X^{(1)}\Delta_5=0\Big |_\mathbf{\Delta=0},\quad  X^{(1)}\Delta_6=0\Big |_\mathbf{\Delta=0}, 
 \end{array}\right.
\end{equation}
where $\mathbf{\Delta}:=(\Delta_1,\,\Delta_2,\,\Delta_3,\, \Delta_4,\, \Delta_5,\, \Delta_6)^T$, being $X^{(1)}$ and $X^{(2)}$ the first and the second extensions of generator $X$, respectively.

By using the symbolic package SYM for Mathematica\textregistered\ deloped by SD, see \cite{dimas1,dimas2},  we obtain from (\ref{INV}) \textit{the determining system}  (see again \cite{bk,i2,i1,olver} for further details). The solutions of such a system provide the components $\xi,\tau, \eta^1, \eta^2,\eta^3,\eta^4,\eta^5,\eta^6$ of the generator (\ref{1.1}). The Principal Lie Algebra $L_{\cal P}$, {\it i.e}, those symmetries leaving the system invariant for all parameters, is spanned by $X=\partial_{x},\,T=\partial_{t}$, which correspond to the generators of translations in space and time.

{\begin{table}[!h]\centering
 \scalefont{0.7}
\caption{ \scalefont{0.9}
Some symmetries of the system (\ref{2.3.4}). A more complete list of symmetries of (\ref{2.3.4}) is presented in the chapter 4 of the references \cite{felipo-tese}. The biological parameters $\be_1,\,\be_2,\,\nu,\,\mu_3$ and $\sigma$ are non-negatives, while the nonlinearities $p,\,q_1$ and $q_2$ may assume any real value.}
\begin{tabular}{|l|l|l|l|l|l|l|l|l|}
\hline\label{tab2}
Case&  $\beta_1$  &     $p$    &   $\beta_2$    &    $\nu$    &   $q_1,\,q_2$  &   $\mu_3$    &    $\sigma$    & Extensions with respect to $L_{\cal P}$\\\hline
  1\phantom'&$\forall$&$0$&$0$&$\forall$&$0$&$\forall$&$\forall$&
  $X_{2}=u\partial_{u} + w\partial_{w} + v\partial_{v}$\\\hline
 
  2\phantom'&$\forall$&$\forall$&$0$&$\forall$&$p/2$&$\forall$&$\forall$&
  $X_{3}=p\,x\,\partial_{x}  +  2\lr(){u\partial_{u} + w\partial_{w} + v\partial_{v}}$\\\hline
   3\phantom'&$\forall$&$\forall$&$0$&$0$&$\forall$&$\forall$&$\forall$&
  $X_{3}=p\,x\,\partial_{x}  +  2\lr(){u\partial_{u} + w\partial_{w} + v\partial_{v}}$\\\hline
  
  4 & $0$ & $0$ & $\neq0$ & $0$ & $\forall$ & $\forall$ & $\forall$ & $X_4=(u+w)\p_u+v\p_v$ \\\hline
 
5 & $0$ & $0$ & $\neq0$ & $0$ & $\forall$ & $\forall$ & $\forall$ & $X_5=w\p_{u}-w\p_{w}$ \\\hline

 6 & $0$ & $\forall$ & $0$ & $\forall$ & $\forall$ & $\forall$ & $\forall$ & $X_\infty=f\p_I+g\p_r\,(f,g)\,\text{is a solution of}\,\Delta_5=0,\,\Delta_6=0$ \\\hline 
 

  7 & $0$ & $0$ & $0$ & $0$ & $\forall$ & $\forall$ & $\forall$ & $X_\infty=f_1\p_{u}+f_2\p_{w}+f_3\p_v+f_4\p_h+f_5\p_I+f_6\p_r,\,(f_1,\cdots,f_6)\,\text{is a solution of}\,\mathbf{\Delta}=0$ \\\hline
    \end{tabular}
\end{table}

As usually occurs in empirical mathematical models, the search for solutions of the determining system, or more precisely, the search for the symmetries, is quite complex as they depend on the biological parameters $\beta_1$, $\beta_2$, $\nu$, $\mu_3$, $\sigma$, jointly with values ensuring nonlinearities $p,\,q_1$ and $q_2$. A more complete list of symmetries of (\ref{2.3.4}) would take a considerable amount of space and is decomposed in several cases and sub-cases, much of them without biological relevance nor meaning. Hence we opt to show only some of them. On Table \ref{tab2} we present some extensions of the Principal Lie Algebra. In the reference \cite{felipo-tese} the reader can find several pages reporting the classification of symmetries of (\ref{2.3.4}).

\subsection{A case of biological relevance}

A case of biological relevance occurs when $\be_1\be_2\sigma\neq0$, which implies that the interaction between mosquitoes and humans is present, as well as there are infected humans recovering from the disease. Moreover, we also consider  $\mu_3=0$, a condition expressing the fact that no human dies, which may occur if a short period of time is considered. 
Therefore, considering a linear combination of the generators of the Principal Lie Algebra given by $cX+T=c\p_x+\p_t$, from the invariant form method (see \cite{bk}, page 197) we obtain the following invariants: $z=x-ct$ and
\bb\label{3.4.3}
u=\Phi_1(z),\, w=\Phi_2(z),\, v=\Phi_3(z),\,
h=\Phi_4(z),\, I=\Phi_5(z),\,r=\Phi_6(z),
\ee
where the dependence on $(x,t)$ was omitted. 

Substitution of (\ref{3.4.3}) into (\ref{2.3.4}) reads
\bb\label{3.4.4}
\left\{\ba{lcl}
-c\Phi_{1}^{\prime}&=&\left(\Phi_{1}+\Phi_{2}\right)^p\Phi_{1}^{\prime\prime}  +p\,(\Phi_1+\Phi_2)^{p-1}(\Phi_1^{\prime}+\Phi_2^{\prime})\Phi_1^{\prime}  -2\nu(\Phi_1)^{q_1}\Phi_1^{\prime}
         +\frac{\gamma}{k}\Phi_{3}\\  
         &&+\left(\epsilon \frac{\gamma}{k}  -\mu_1\right)\Phi_1  -\beta_1\Phi_1\Phi_5,\\
       
-c\Phi_{2}^{\prime}&=&\left(\Phi_{1}+\Phi_{2}\right)^p\Phi_{2}^{\prime\prime}  +p\,(\Phi_1+\Phi_2)^{p-1}(\Phi_1^{\prime}+\Phi_2^{\prime})\Phi_2^{\prime}  
         -2\nu(\Phi_2)^{q_2}\Phi_2^{\prime}\\ & &
         +\left(\epsilon \frac{\gamma}{k}  -\mu_1\right)\Phi_2  +\beta_1\Phi_1\Phi_5,\\
        
-c\Phi_{3}^{\prime}&=&  k\,(\Phi_{1}+\Phi_{2})+(\epsilon k-\mu_2-\gamma)\Phi_{3},\\

-c\Phi_{4}^{\prime}&=& -\beta_2\Phi_2\Phi_4,\\
    
-c\Phi_{5}^{\prime}&=& \beta_2\Phi_2\Phi_4  -\sigma\Phi_5,\\

-c\Phi_{6}^{\prime}&=& \sigma\Phi_5.
\ea\right.      
\ee

\section{Spatial homogeneity}\label{spatial dynamics}

In what follows we make the assumption that $\epsilon=0$. This enables us to compare some of our results with those obtained in \cite{maidana2005}. It will be of great importance in our analysis the following quantities: the basic offspring number
\bb\label{4.1}
\Q=\f{\gamma}{\mu_1(\gamma+\mu_2)}
\ee
and the basic reproduction number
\bb\label{4.2}
{\cal R}_0=\f{\be_1\be_2h^\ast u^\ast}{\mu_1\sigma}.
\ee
The latter depends explicitly on the densities of susceptible mosquitoes $u^\ast$ and humans $h^\ast$. If we do not choose $\epsilon=0$, $\Q$ might depend on this parameter, which would not allow us to proceed to a comparison with the results of \cite{maidana2005}.

\subsection{Preliminaries}
At this point it will be useful to recall some facts on the theory of ordinary differential equations. To begin with, let $U\subseteq\R^n$,  $I\subseteq\R$, $u:I\rightarrow U$ and $f:U\rightarrow\R^m$ be, a connected open set, an interval, a smooth function such that $u(t_0)=u_0$, where $t_0\in I$, and a vector field, respectively. Consider the system of ordinary differential equations with initial condition
\bb\label{4.3}
\left\{\ba{l}
u'(t)=f(u),\\
\\
u(t_0)=u_0.
\ea\right.
\ee
If $u_0$ is such that $f(u_0)=0$, then $u\equiv u_0$ is said to be an \textit{\textit{equilibrium point}}  of the system (\ref{4.3}). In particular, it is also a solution of the system, called equilibrium solution.

A solution $u$ of the system (\ref{4.3}) is said to be stable if, for every $\eps>0$, there is a $\delta=\delta(\eps)$ such that for any other solution of (\ref{4.3}) $w$ for which $\|u-w\|<\delta$ at $t=t_0$, satisfies the further inequality $\|u-w\|<\eps$ for $t\geq t_0$. Otherwise, the solution $u$ is said to be unstable.

If $u_0$ is an equilibrium point of $f:U\rightarrow\R^m$, then $u_0$ is called asymptotically stable to $f$ if, for any $\eps>0$, there is $\delta>0$ such that, for any $u\in U$ and any $t>t_0$, $\phi_t(u)\rightarrow u_0$, when $t\rightarrow t_0$ and $\|u-u_0\|<\delta$ implies $\|\phi_t(u)-u_0\|<\eps$, where, for each $t$, $\phi_t(u)=\phi(t,u)$ and $\phi:\R\times U\rightarrow\R^m$ is the flux through $u$, that is, $\p_t\phi(t,u)=f(\phi(t,u))$.

In what follows, $f:U\subseteq\R^n\rightarrow\R^m$ is a vector field, $u_0\in U$ is a point, $J_f(u_0)$ is the Jacobian matrix of $f$ evaluated at $u_0$. The next three propositions can be found in \cite{doe}, pages 195, 198 and 49, respectively.

\begin{proposition}\label{prop1}
Let $u_0\in U$ be a point such that $f(u_0)=0$. If all eigenvalues of $J_f(u_0)$ have negative real part, then $u_0$ is an asymptotically stable point to $f$.
\end{proposition}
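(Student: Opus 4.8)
The plan is to run Lyapunov's indirect (linearisation) method: reduce $(\ref{4.3})$ to a perturbation of its linear part at $u_0$, estimate the matrix exponential of the Jacobian, and close the argument with Gronwall's inequality. Put $A:=J_f(u_0)$. Since $f$ is smooth and $f(u_0)=0$, Taylor's theorem gives $f(u_0+y)=Ay+g(y)$ with $g$ smooth and $g(y)=o(\|y\|)$ as $y\to 0$. Writing $y(t):=u(t)-u_0$, the initial value problem becomes $y'=Ay+g(y)$, $y(t_0)=y_0$, and by the variation of parameters formula every solution satisfies
\[
y(t)=e^{A(t-t_0)}y_0+\int_{t_0}^{t}e^{A(t-s)}\,g(y(s))\,ds .
\]

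Next I would bring in the spectral hypothesis. As every eigenvalue $\lambda$ of $A$ has $\mathrm{Re}\,\lambda<0$, fix $\al>0$ with $\mathrm{Re}\,\lambda<-\al$ for all of them; since the entries of $e^{At}$ are finite sums of terms $t^{j}e^{\lambda t}$ over eigenvalues $\lambda$ of $A$ and $e^{-\al t}$ absorbs the polynomial growth, there is $K\ge 1$ with $\|e^{At}\|\le Ke^{-\al t}$ for all $t\ge 0$. Because $g(y)=o(\|y\|)$, choose $\eps>0$ with $K\eps<\al$ and then $\rho>0$ so that $\|g(y)\|\le\eps\|y\|$ whenever $\|y\|\le\rho$.

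The heart of the proof is a bootstrap on the integral equation. Assume $\|y_0\|<\rho/(2K)$. As long as the solution stays in $\{\|y\|\le\rho\}$, the integral equation gives $\|y(t)\|\le Ke^{-\al(t-t_0)}\|y_0\|+K\eps\int_{t_0}^{t}e^{-\al(t-s)}\|y(s)\|\,ds$; multiplying by $e^{\al(t-t_0)}$ and applying Gronwall's inequality to $e^{\al(t-t_0)}\|y(t)\|$ yields
\[
\|y(t)\|\le K\|y_0\|\,e^{-(\al-K\eps)(t-t_0)}\le K\|y_0\|<\tfrac{\rho}{2}.
\]
An exit-time argument upgrades this to a bound valid for all $t\ge t_0$: if $T^{\ast}$ denotes the supremum of times up to which $\|y\|\le\rho$, then $T^{\ast}>t_0$ by continuity, the displayed bound holds on $[t_0,T^{\ast})$ and gives $\|y\|<\rho/2$ there, so a finite $T^{\ast}$ would force the contradiction $\|y(T^{\ast})\|=\rho\le\rho/2$. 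Hence the solution exists for all $t\ge t_0$ and $\|y(t)\|\le K\|y_0\|e^{-(\al-K\eps)(t-t_0)}\to 0$, i.e.\ $\phi_t(u)\to u_0$. Given $\eps'>0$, choosing $\de:=\min\{\rho/(2K),\eps'/K\}$ also ensures $\|\phi_t(u)-u_0\|<\eps'$ for every $t\ge t_0$, which is precisely the asymptotic stability of $u_0$.

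The only genuinely subtle step is this bootstrap: the estimate $\|g(y)\|\le\eps\|y\|$ is only available on $\{\|y\|\le\rho\}$, so one has to prove the trajectory never leaves that ball rather than assume it — which is exactly what the exit-time step provides. An alternative that avoids the issue altogether is the direct (second) Lyapunov method: since $\mathrm{spec}(A)$ lies in the open left half-plane, the Lyapunov equation $A^{\top}B+BA=-\mathrm{Id}$ has a symmetric positive definite solution $B$; with $V(y):=y^{\top}By$ one gets $\dot V=-\|y\|^{2}+2y^{\top}Bg(y)$, which is negative definite on a punctured neighbourhood of $0$ because $g(y)=o(\|y\|)$, and the classical Lyapunov stability theorem then gives the conclusion directly.
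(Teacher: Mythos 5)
Your proof is correct. The paper itself offers no argument for Proposition \ref{prop1}: it is quoted as a standard fact from \cite{doe} (page 195), so there is nothing in the text to compare against. What you give is the classical proof of Lyapunov's indirect method --- variation of parameters around $y'=Ay+g(y)$, the exponential bound $\|e^{At}\|\le Ke^{-\al t}$ coming from the spectral hypothesis, Gronwall applied to $e^{\al(t-t_0)}\|y(t)\|$, and the exit-time bootstrap --- and you correctly identify and close the one genuinely delicate point, namely that the estimate $\|g(y)\|\le\eps\|y\|$ is only valid inside the ball $\{\|y\|\le\rho\}$ and the trajectory must be shown never to leave it (which also guarantees global forward existence). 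The alternative via the Lyapunov equation $A^{\top}B+BA=-\mathrm{Id}$ is equally valid and is the other standard route.
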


\begin{proposition}\label{prop2}
Let $u_0$ be an equilibrium point of $f$. If $J_f(u_0)$ has an eigenvalue with positive real part, then $u_0$ is an unstable point.
\end{proposition}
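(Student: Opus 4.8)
\emph{Proof sketch.} The plan is to reduce the statement to the classical Chetaev instability argument. First I would translate the equilibrium to the origin, setting $v=u-u_0$, so that $v'=Av+g(v)$ with $A:=J_f(u_0)$, $g(0)=0$ and $g(v)=o(\|v\|)$ as $v\to 0$ (using that $f$ is $C^1$ near $u_0$). Next I would split $\R^n=E_+\oplus E_-$ into the $A$-invariant generalized eigenspaces corresponding, respectively, to the eigenvalues of $A$ with positive real part -- nontrivial by hypothesis -- and to those with nonpositive real part, and write $v=v_++v_-$, $g(v)=g_+(v)+g_-(v)$ accordingly. The technical heart is the \emph{adapted-norm lemma}: one can choose inner products on $E_+$ and on $E_-$, and a constant $\mu>0$, such that $\langle Av_+,v_+\rangle\geq\mu\|v_+\|^2$ for all $v_+\in E_+$ and $\langle Av_-,v_-\rangle\leq\f{\mu}{2}\|v_-\|^2$ for all $v_-\in E_-$; this follows by bringing $A$ to a near-diagonal form on each invariant subspace, the point being that the real parts of the relevant eigenvalues are bounded below by a positive number on $E_+$ and bounded above by $0$ on $E_-$. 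I would equip $\R^n$ with the norm $\|v\|^2=\|v_+\|^2+\|v_-\|^2$.

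Next I would introduce the Chetaev functional $W(v)=\f12(\|v_+\|^2-\|v_-\|^2)$ on a ball $B_r(0)$ together with the open cone $\Omega=\{v:\|v_+\|>\|v_-\|\}=\{W>0\}$, which has the origin on its boundary. Differentiating along a solution gives
\bb
\f{d}{dt}W(v)=\langle Av_+,v_+\rangle-\langle Av_-,v_-\rangle+\langle g_+(v),v_+\rangle-\langle g_-(v),v_-\rangle .
\ee
On $\Omega$ the linear part is at least $\mu\|v_+\|^2-\f{\mu}{2}\|v_-\|^2\geq\f{\mu}{2}\|v_+\|^2$, while the two error terms are $o(\|v\|^2)=o(\|v_+\|^2)$ because on $\Omega$ one has $\|v_+\|^2\leq\|v\|^2\leq 2\|v_+\|^2$. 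Hence, after shrinking $r$, there is $c>0$ with $\f{d}{dt}W(v)\geq c\|v\|^2\geq 2c\,W(v)$ for every $v\in\Omega\cap B_r(0)$.

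The last step is the trapping/escape argument. Given $\de>0$, choose $v_0\in\Omega$ with $0<\|v_0\|<\de$. As long as the solution $\phi_t(v_0)$ stays in $B_r(0)$ it also stays in $\Omega$: it cannot reach $\p\Omega$, where $W=0$, because on the part of its trajectory lying in $\Omega\cap B_r(0)$ the inequality $\f{d}{dt}W\geq 2c\,W$ forces $W(\phi_t(v_0))\geq W(v_0)e^{2ct}\geq W(v_0)>0$ by Gronwall. Consequently $W(\phi_t(v_0))$ grows without bound in $t$, whereas $W$ is bounded on $\overline{B_r(0)}$; therefore the trajectory must exit $B_r(0)$ at some finite time. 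Taking $\eps:=r$, which does not depend on $\de$, we have shown that every neighbourhood of $u_0$ contains a point whose orbit leaves the $\eps$-ball, which is precisely the negation of stability; hence $u_0$ is unstable. The step I expect to be the main obstacle is the adapted-norm lemma -- producing inner products in which $A$ is uniformly expanding on $E_+$ and almost non-expanding on $E_-$ -- since once it is available the derivative estimate and the Gronwall/escape argument are routine. As an alternative one could invoke the unstable-manifold theorem: there is an invariant manifold through $u_0$ tangent to $E_+$ on which every orbit tends to $u_0$ as $t\to-\infty$, hence leaves every small neighbourhood as $t$ increases, and then conclude by continuous dependence on initial data; but the Chetaev route above is self-contained.
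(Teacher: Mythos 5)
Your argument is correct, and it is worth noting that the paper itself offers no proof of this proposition: Propositions~1--3 are simply quoted from the textbook of Doering and Lopes (reference \cite{doe}), so there is no internal proof to compare against. Your Chetaev-cone construction is the standard self-contained route to the linearized instability theorem, and the details check out: the adapted-norm lemma you isolate as the crux is the usual ``scaled real Jordan form'' statement (on $E_+$ one gets $\langle Av_+,v_+\rangle\geq(\alpha-\eps)\|v_+\|^2$ with $\alpha>0$ the smallest positive real part, on $E_-$ one gets $\langle Av_-,v_-\rangle\leq\eps\|v_-\|^2$ for any $\eps>0$, which is all one can ask for when $E_-$ carries purely imaginary eigenvalues with nontrivial Jordan blocks); your choice of $\mu$ and $\mu/2$ is consistent with this. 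The differentiation of $W$ tacitly uses that the projections onto $E_\pm$ commute with $A$, which holds because these are $A$-invariant generalized eigenspaces, and the cone inequality $\|v_+\|^2\leq\|v\|^2\leq 2\|v_+\|^2$ on $\Omega$ correctly absorbs the $o(\|v\|^2)$ error terms. The escape argument (first-exit-time contradiction via $W\geq W(v_0)e^{2ct}>0$, plus boundedness of $W$ on $\overline{B_r}$) matches the paper's definition of instability with $\eps=r$ independent of $\de$. The only cosmetic gap is that you should record that solutions starting in $B_r$ either leave $B_r$ or are defined for all forward time (trajectories confined to a compact set extend indefinitely), so that the unbounded growth of $W$ genuinely produces a contradiction; this is routine. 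The alternative you mention (unstable manifold theorem plus continuous dependence) would also work but is heavier machinery than needed.
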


\begin{proposition}\label{prop3}
If $f$ is linear, $u_0$ is an equilibrium point of $f$ and all eigenvalues of $J_f(u_0)$ have real part negative or $0$, then $u_0$ is a stable point.
\end{proposition}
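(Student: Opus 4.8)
The plan is to exploit linearity to reduce the whole statement to a single estimate on a matrix exponential. Since $f$ is linear there is a matrix $A$ with $f(u)=Au$; then $J_f(u)=A$ for every $u$, in particular $J_f(u_0)=A$, and $f(u_0)=0$ just means $Au_0=0$. Given any two solutions $u,w$ of (\ref{4.3}), their difference $y:=u-w$ satisfies the linear homogeneous system $y'=Ay$, hence $y(t)=e^{(t-t_0)A}y(t_0)$ for all $t$. Therefore $\|u(t)-w(t)\|\le\|e^{(t-t_0)A}\|\,\|u(t_0)-w(t_0)\|$, and everything comes down to showing that $\|e^{sA}\|$ remains bounded for $s\ge0$.

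To get that bound I would pass to the (complex) Jordan canonical form $A=PJP^{-1}$, so that $e^{sA}=Pe^{sJ}P^{-1}$ and it suffices to control $e^{sJ}$ block by block. A Jordan block of size $m$ with eigenvalue $\lambda$ produces entries of the form $e^{\lambda s}\,s^{j}/j!$ with $0\le j\le m-1$. If $\mathrm{Re}\,\lambda<0$ each such entry tends to $0$ as $s\to+\infty$, hence is bounded on $[0,\infty)$; if $\mathrm{Re}\,\lambda=0$ then $|e^{\lambda s}s^{j}/j!|=s^{j}/j!$, which is bounded on $[0,\infty)$ exactly when $j=0$, i.e. when the block is $1\times1$. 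Assembling the blocks yields a constant $M\ge1$ with $\|e^{sA}\|\le M$ for every $s\ge0$.

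With this estimate the conclusion is immediate: given $\eps>0$ set $\delta:=\eps/M$. If $w$ is any solution of (\ref{4.3}) with $\|u-w\|<\delta$ at $t=t_0$, then for all $t\ge t_0$
\[
\|u(t)-w(t)\|=\|e^{(t-t_0)A}(u(t_0)-w(t_0))\|\le M\,\|u(t_0)-w(t_0)\|<M\delta=\eps,
\]
so the equilibrium solution $u\equiv u_0$ satisfies the definition of stability given above.

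The one delicate point — the main obstacle — is the borderline case $\mathrm{Re}\,\lambda=0$: boundedness of $e^{sA}$, and hence stability, genuinely needs the eigenvalues on the imaginary axis to be semisimple (no nontrivial Jordan block), a single $2\times2$ nilpotent block already producing a term growing like $s$. This is the content implicit in the hypothesis ``real part $0$'' as it is used in \cite{doe}, and it is exactly where the argument differs from the proof of Proposition \ref{prop1}; all the rest is the routine Jordan-form bookkeeping sketched above.
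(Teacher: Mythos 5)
Your argument is correct and is the standard one; note that the paper itself gives no proof of Proposition \ref{prop3} at all---it simply cites \cite{doe} (page 49)---so there is no internal proof to compare against, and supplying the matrix-exponential/Jordan-form argument is the right way to fill that in. The most valuable thing in your write-up is the caveat you isolate at the end: as literally stated the proposition is \emph{false}, since for $A=\left(\begin{smallmatrix}0&1\\0&0\end{smallmatrix}\right)$ every eigenvalue has real part $0$ yet $e^{sA}=\left(\begin{smallmatrix}1&s\\0&1\end{smallmatrix}\right)$ is unbounded and the origin is unstable; the hypothesis must include that the eigenvalues on the imaginary axis are semisimple, and your block-by-block computation shows exactly where that enters. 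This defect in the statement is harmless for the way the paper uses it: in Theorem \ref{teo4} the relevant matrix (\ref{4.3.3}) is $2\times2$ with the two distinct eigenvalues $0$ and $-(\gamma+\mu_1+\mu_2)<0$, so the zero eigenvalue is automatically simple and your uniform bound $\lVert e^{sA}\rVert\le M$ for $s\ge0$ applies, giving stability with $\delta=\varepsilon/M$ exactly as required by the definition of stability stated in the paper. The only cosmetic remark is that your reduction $y=u-w$, $y'=Ay$ silently uses that both solutions solve the same linear system, which is precisely what the paper's definition of stability of the equilibrium solution calls for, so the argument is complete.
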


\subsection{Temporal dynamics}\label{homogeneity}

We now consider the temporal dynamics, or temporal conditons of the stability, of the system (\ref{2.3.4}). This condition is necessary for the spatial condition which determine the mosquitoes invasion.

Assuming that $\mu_3=0$ and $u_x=u_{xx}=w_x=w_{xx}=0$, system (\ref{2.3.3}) becomes 
\bb\label{4.1.1}
\left\{\ba{l}
\ds{\f{\text d u}{\text d t}=\f{\gamma}{k}v- \mu_{1}u- \beta_{1} u I,}\\
\\
   \displaystyle
   \f{\text d w}{\text d t}= -\mu_{1}w            + \beta_{1}u I,\\
   \\

   \displaystyle
   \f{\text d v}{\text d t}= k (u+w)
              -(\mu_{2}+\gamma)v,\\
              \\
  
   \displaystyle
   \f{\text d h}{\text d t}= -\beta_{2}hw,\\
\\
   \displaystyle
   \f{\text d I}{\text d t}=\beta_{2}hw
              -\sigma I,\\
              \\
   \displaystyle
   \f{\text d r}{\text d t}=\sigma I.
\ea\right.
\ee

The equilibrium points of the system (\ref{4.1.1}) are given by
    $w^*=0,\,\,
    I^*=0,\,\,
    r^*=1-h^*,\,\,
    0\leq h^*\leq1,$
and
\bb\label{4.1.3}
\ba{l}
  \ds{    u^*=\f{\gamma}{k\mu_1}v^*,}\,\,\,\,
  \ds{    v^*=\f{k}{\gamma+\mu_2}u^*.}
\ea
\ee

Equations (\ref{4.1.3}) can be equivalently rewritten as the system
\bb\label{4.1.4}
 A \begin{bmatrix}
  u^*\\
  v^*
 \end{bmatrix}
=\begin{bmatrix}
   0\\0
 \end{bmatrix},\qquad
  A=
  \begin{bmatrix}
  -1                         &{\gamma}/{(k\mu_1)}\\
  {k}/{(\gamma+\mu_2)}& -1
\end{bmatrix}.
\ee

A quick calculation shows that $\det(A)=1-\f{\gamma}{(\gamma+\mu_2)\mu_1}=1-\Q.$

In view of the latter equation, system (\ref{4.1.4}) will have unique solution if and only if $\det(A)\neq0$, which is equivalent to $\Q\neq1$. Provided that such a condition holds, we have the following set of equilibrium points:
\bb\label{4.1.5}
    \text{E}_0=\{(u^*,w^*,v^*,h^*,I^*,r^*)=(0,0,0,h^*,0,1-h^*),\quad 0\leq h^*\leq1\}.
\ee

On the other hand, if one assumes that $\Q=1$, then the system (\ref{4.1.4}) loses the uniqueness of solutions. Consequently, the equilibrium points of (\ref{4.1.1}) belong to the region
\bb\label{4.1.6}
    \text{E}_1=\left\{\left(u^*,w^*,v^*,h^*,I^*,r^*\right)=\left(\f{v^\ast\gamma}{k\mu_1},0,v^\ast,h^*,0,1-h^*\right),\quad 0\leq h^*\leq1,\quad v^\ast>0\right\}.
\ee

\begin{remark}
If we had not assumed $\epsilon=0$, then the system $(\ref{4.1.4})$ would have been
\bb\label{4.1.7}
A_\epsilon \begin{bmatrix}
  u^*\\
  v^*       
   \end{bmatrix}     
=\begin{bmatrix}
   0\\0
 \end{bmatrix},\qquad
  A_\epsilon= \begin{bmatrix}
-1           &\f{\gamma}{k\mu_1-\epsilon\gamma}\\
\f{k}{\mu_2+\gamma-\epsilon k}&  -1
\end{bmatrix}
\ee
and $\det(A_\epsilon)=1-\f{k\gamma}{(k\mu_1-\epsilon\gamma)(\gamma+\mu_2-\epsilon k)}=:1-{\cal Q}_\epsilon.$
Then the values of the basic offspring number depend on $\epsilon$ and, in order to compare with the results of \cite{maidana2005}, we must take $\epsilon=0$.

\end{remark}

\subsection{Jacobian matrices and eigenvalues in the dynamics of humans and mosquitoes}

Let $p=(0,0,0,h^*,0,1-h^*)\in  \text{E}_0$. The Jacobian matrix associated to the system (\ref{4.1.1}) at $p$ is given by
\bb\label{4.2.1}
  \text J(p)=\left[
    \begin{array}{cccccc}
      -\mu _1 & 0 & \gamma/k & 0 & 0 & 0 \\
      0 & -\mu _1 & 0 & 0 & 0 & 0 \\
      k & k & -(\gamma +\mu _2) & 0 & 0 & 0 \\
      0 & -\beta _2 h^* & 0 & 0 & 0 & 0 \\
      0 & \beta _2 h^* & 0 & 0 & -\sigma  & 0 \\
      0 & 0 & 0 & 0 & \sigma  & 0 \\
    \end{array}\right].
\ee

The eigenvalues of (\ref{4.2.1}) are $0$ and the roots of the polynomial
\bb\label{4.2.2}
 \text P_1(\lambda)=
  \lambda ^2
 +\lambda   (\gamma +\mu _1+\mu _2)
 +\mu_1(1-\Q)(\gamma +\mu _2),
\ee
are
\bb\label{4.2.3}
  \lambda_{1,2} = \frac{1}{2}\left[-(\gamma +\mu _1+\mu _2)\pm\sqrt{\left(\gamma +\mu _1+\mu _2\right){}^2 + 4 \mu _1 (\Q-1)\left(\gamma +\mu _2\right)}\right].
\ee

On the other hand, the Jacobian matrix associated to the system (\ref{4.1.1}) evaluated at a point $q=(v^\ast\gamma/(k\mu_1),0,v^\ast,h^*,0,1-h^*)\in \text{E}_1$ is
{\small\bb\label{4.2.4}
    \text J(q)=
    \left[
      \begin{array}{cccccc}
        -\mu _1 & 0 & \gamma/k & 0 & -{\gamma  \beta _1 v^*}/{k \mu _1} & 0 \\
        0 & -\mu _1 & 0 & 0 & {\gamma  \beta _1 v^*}/{k \mu _1} & 0 \\
        k & k & -(\gamma +\mu _2) & 0 & 0 & 0 \\
        0 & -\beta _2 h^* & 0 & 0 & 0 & 0 \\
        0 & \beta _2 h^* & 0 & 0 & -\sigma  & 0 \\
        0 & 0 & 0 & 0 & \sigma  & 0 \\
      \end{array}
    \right],
\ee}
whose eigenvalues are $0$ and, in addition, the roots of the polynomials 
\bb\label{4.2.5}
\ba{l}
    \text P_2(\lambda) =\lambda ^2
   +\lambda \left(\gamma +\mu _1+\mu _2\right)
   +\mu _1 (1-\Q) \left(\gamma +\mu _2\right)
   =\lambda(\lambda+(\gamma +\mu _1+\mu _2)),\\
   \\
    \text P_3(\lambda)=
    \lambda ^2
    +\lambda\left(\mu _1+\sigma \right)
    +\mu _1 \sigma(1-\Ro).
\ea
\ee
In $\text P_2(\lambda)$ above, we made use the fact that $\Q=1$ in the region $\text{E}_1$.

Our first results regarding the equilibrium points can now be announced.

\begin{theorem}\label{teo1}
Let $p\in\text{E}_0$ and $\Q$, where $\text{E}_0$ and $\Q$ are given in $(\ref{4.1.5})$ and $(\ref{4.1})$, respectively. If ${\cal Q}_0>1$, then the equilibrium point $p$ is unstable.
\end{theorem}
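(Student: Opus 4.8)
The plan is to invoke Proposition \ref{prop2}: it is enough to exhibit an eigenvalue of the Jacobian matrix $\text{J}(p)$ in (\ref{4.2.1}) with strictly positive real part whenever $\Q>1$. Note that although $\text{E}_0$ in (\ref{4.1.5}) is a whole curve of equilibria rather than an isolated point, Proposition \ref{prop2} only requires $p$ to be an equilibrium point, which it is, so this causes no difficulty.

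First I would pin down the spectrum of $\text{J}(p)$. Because the fourth and sixth columns of the matrix in (\ref{4.2.1}) vanish while the rows associated with $h$ and $r$ decouple, its characteristic polynomial factors as $\lambda^{2}(\lambda+\mu_1)(\lambda+\sigma)\,\text{P}_1(\lambda)$, with $\text{P}_1$ the quadratic written in (\ref{4.2.2}). Hence the eigenvalues are $0$ (with multiplicity two), $-\mu_1$, $-\sigma$, and the two roots $\lambda_{1,2}$ displayed in (\ref{4.2.3}); since $\mu_1,\sigma\geq0$, the first three have non-positive real part, so any instability must be detected through $\lambda_{1,2}$.

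Now assume $\Q>1$. Since $\Q=\gamma/[\mu_1(\gamma+\mu_2)]$ exceeds $1$ while $\gamma$ is a non-negative rate, we must have $\mu_1(\gamma+\mu_2)>0$, and therefore $4\mu_1(\Q-1)(\gamma+\mu_2)>0$. Consequently the quantity under the square root in (\ref{4.2.3}) is strictly larger than $(\gamma+\mu_1+\mu_2)^2$, which forces
\bb
\sqrt{(\gamma+\mu_1+\mu_2)^2+4\mu_1(\Q-1)(\gamma+\mu_2)}\;>\;\gamma+\mu_1+\mu_2,
\ee
so the root $\lambda_1$ taken with the plus sign in (\ref{4.2.3}) is strictly positive. (Equivalently, without using the explicit roots: $\text{P}_1(0)=\mu_1(1-\Q)(\gamma+\mu_2)<0$ while $\text{P}_1(\lambda)\to+\infty$ as $\lambda\to+\infty$, so $\text{P}_1$ has a positive real root by the intermediate value theorem.) Thus $\text{J}(p)$ has a real, strictly positive eigenvalue, and Proposition \ref{prop2} yields that $p$ is unstable.

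I do not expect a genuine obstacle here: the argument is elementary once the characteristic polynomial has been factored. The single point that deserves a careful line is that positivity of the biological parameters is exactly what pushes the discriminant above $(\gamma+\mu_1+\mu_2)^2$, which is also precisely where the hypothesis $\Q>1$ enters — in contrast to the borderline case $\Q=1$, which produces the different equilibrium set $\text{E}_1$ of (\ref{4.1.6}) and must be treated on its own.
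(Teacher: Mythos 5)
Your proposal is correct and follows essentially the same route as the paper: both arguments observe that under $\Q>1$ the root $\lambda_1$ in $(\ref{4.2.3})$ is strictly positive (because the discriminant then exceeds $(\gamma+\mu_1+\mu_2)^2$) and conclude instability from Proposition \ref{prop2}. The only difference is that you spell out the full factorization of the characteristic polynomial of $(\ref{4.2.1})$ (correctly adding the eigenvalues $-\mu_1$ and $-\sigma$, which the paper omits from its list) and offer an intermediate-value alternative, but these are elaborations rather than a different argument.
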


\begin{proof}
Let $p\in\text{E}_0$ and $J(p)$ given by (\ref{4.2.1}). Assuming $\Q>1$, it follows from (\ref{4.2.3}) that $\lambda_1>0$. Thus, by Proposition \ref{prop2}, we have proved the theorem in question.
\end{proof}

\begin{theorem}\label{teo2}
Let $\Q=1$ and $q\in\text{E}_1$ and $\Ro$, where $\text{E}_1$ and $\Ro$ are given in $(\ref{4.1.6})$ and $(\ref{4.2})$, respectively. If ${\cal R}_0>1$, then the equilibrium point $q$ is unstable.
\end{theorem}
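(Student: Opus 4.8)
The plan is to mimic the argument used for Theorem \ref{teo1}, now applied to the second factor $\text P_3(\lambda)$ of the characteristic polynomial at $q$. First I would take $q=(v^\ast\gamma/(k\mu_1),0,v^\ast,h^*,0,1-h^*)\in\text E_1$ and recall that, as recorded just before the statement, the eigenvalues of the Jacobian $\text J(q)$ in $(\ref{4.2.4})$ are $0$ together with the roots of $\text P_2(\lambda)$ and $\text P_3(\lambda)$ from $(\ref{4.2.5})$. Since in the region $\text E_1$ we have $\Q=1$, the polynomial $\text P_2$ contributes only the non-positive eigenvalues $0$ and $-(\gamma+\mu_1+\mu_2)$, so no instability can come from that block; the decisive information lies in $\text P_3$.

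Next I would analyse $\text P_3(\lambda)=\lambda^2+(\mu_1+\sigma)\lambda+\mu_1\sigma(1-\Ro)$. Under the hypothesis $\Ro>1$ the constant term $\mu_1\sigma(1-\Ro)$ is strictly negative, because $\mu_1>0$ and $\sigma>0$ (the latter being guaranteed in the biologically relevant regime $\be_1\be_2\sigma\neq0$). A monic real quadratic with strictly negative constant term automatically has real roots of opposite sign: indeed its discriminant $(\mu_1+\sigma)^2-4\mu_1\sigma(1-\Ro)>0$, and by Vieta's formula the product of the roots equals $\mu_1\sigma(1-\Ro)<0$. Hence $\text P_3$ possesses a root $\lambda_+>0$, which is therefore an eigenvalue of $\text J(q)$ with positive real part.

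Finally I would invoke Proposition \ref{prop2}: since the Jacobian $\text J(q)$ of the vector field defining $(\ref{4.1.1})$ has an eigenvalue with positive real part at the equilibrium point $q$, the point $q$ is unstable. This completes the proof.

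There is essentially no serious obstacle here; the only point demanding a word of care is the sign bookkeeping — namely making explicit that $\mu_1,\sigma>0$ so that the constant term of $\text P_3$ is genuinely negative (and not merely non-positive), which is what forces a strictly positive, rather than merely non-negative, eigenvalue and thus genuine instability rather than the borderline situation covered by Proposition \ref{prop3}.
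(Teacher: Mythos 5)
Your proposal is correct and follows essentially the same route as the paper: identify the factor $\text P_3(\lambda)$ of the characteristic polynomial of $\text J(q)$, show it has a strictly positive real root when $\Ro>1$, and conclude instability via Proposition \ref{prop2}. The only cosmetic difference is that you argue via the sign of the constant term (Vieta), whereas the paper writes out the quadratic formula explicitly; the content is identical.
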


\begin{proof}
Consider the polinomial $\text P_3(\lambda)$ given by (\ref{4.2.5}). A straightforward calculation shows that 
$$\lambda= \frac{1}{2}\left[-(\sigma +\mu _1)\pm\sqrt{(\sigma +\mu _1)^2+4\sigma\mu _1(\Ro-1)}\right]>0$$
under the hypothesis of the theorem. This implies that the matrix (\ref{4.2.4}) has at least one eigenvalue with positive real part and again, the result is a consequence of Proposition \ref{prop2}. 
\end{proof}

\subsection{Analysis of the mosquitoes population}

Seeing that in both equilibrium regions $\text{E}_0$ and $\text{E}_1$ we have $I^\ast=0$, which implies in the absence of infected humans, the first three equations of (\ref{4.1.1}) have a dynamics independent of that of humans. Moreover, if $I=0$, then $w=0$ and we shall therefore pay attention to the following subsystem of (\ref{4.1.1}):
\bb\label{4.3.1}
\left\{\ba{l}
   \displaystyle
   \f{\text d u}{\text d t}=
              \f{\gamma}{k}v
            - \mu_{1}u,\\
         \\   
   \displaystyle
   \f{\text d v}{\text d t}= k u
              -(\mu_{2}+\gamma)v.
              \ea\right.
\ee

Proceeding as in the previous sections, we have the following set of equilibrium points (restricting to a bidimensional space with coordinates $(u,v)$):
\bb\label{4.3.2}
E'=\{(\gamma v^\ast/(k\mu_1),v^\ast),\quad v^\ast\geq0\}.
\ee

A featured point of $E'$ is $\text e_0=(0,0)$, which can only be achieved provided that $\Q\neq 1$. Otherwise, if $\Q=1$ and $ \text e\in E'$, then $ e\neq \text e_0$. In what follows, we denote by $\text e$ any point of $E'$ different from $\text e_0$.

The Jacobian associated to (\ref{4.3.1}) evaluated at $\text e_0$ and $ \text e\in E'$ are, respectively, given by
\bb\label{4.3.3}
\text J(\text e_0)=
\text J(\text e)=
\lr[]{
  \begin{array}{ccc}
    -\mu _1 &  {\gamma }/{k} \\
    k &  -(\gamma +\mu _2) \\
  \end{array}},
\ee
whose characteristic polynomial and eigenvalues are, respectively,
\bb\label{4.3.4}
  \bar{\text P}(\lambda)=\lambda ^2+\lambda  \left(\gamma +\mu _1+\mu _2\right)-\mu_1(\gamma+\mu_1)(\Q-1),
\ee

\bb\label{4.3.5}
  \lambda_{1,2}^*=\f{1}{2}\lr[]{-(\gamma+\mu_1+\mu_2)\pm \sqrt{(\gamma+\mu_1+\mu_2)^2+4\mu_1(\gamma+\mu_1)(\Q-1)}}.
\ee

Now we present the main results concerning qualitative aspects of system (\ref{4.3.1}). We begin with two auxiliary lemmas.

\begin{lemma}\label{lema1}
If $\Q<1$, then the real part of the roots $\lambda_{1,2}^\ast$ $(\ref{4.3.5})$ are negative.
\end{lemma}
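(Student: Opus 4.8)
The plan is to analyze the quadratic $\bar{\text P}(\lambda)=\lambda^2+\lambda(\gamma+\mu_1+\mu_2)-\mu_1(\gamma+\mu_1)(\Q-1)$ directly via its coefficients, using the elementary fact that a real quadratic $\lambda^2+b\lambda+c$ with $b,c>0$ has both roots with negative real part (the Routh--Hurwitz criterion in dimension two, or just the explicit root formula). Here $b=\gamma+\mu_1+\mu_2$, which is manifestly positive since $\gamma,\mu_1,\mu_2$ are nonnegative biological parameters (and not all zero). The constant term is $c=-\mu_1(\gamma+\mu_1)(\Q-1)=\mu_1(\gamma+\mu_1)(1-\Q)$, which under the hypothesis $\Q<1$ is a product of positive quantities, hence positive. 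So both hypotheses $b>0$ and $c>0$ hold.

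From there I would split into the two cases according to the sign of the discriminant $\Delta:=(\gamma+\mu_1+\mu_2)^2+4\mu_1(\gamma+\mu_1)(\Q-1)=b^2-4c$. If $\Delta<0$, the roots in $(\ref{4.3.5})$ are genuinely complex conjugates with real part $-\tfrac12(\gamma+\mu_1+\mu_2)<0$, so we are done immediately. If $\Delta\geq0$, the roots are real, and I would observe that $0\leq\Delta=b^2-4c<b^2$ (using $c>0$), so $\sqrt{\Delta}<b$; hence $-b+\sqrt{\Delta}<0$ and also $-b-\sqrt{\Delta}<0$, giving $\lambda_{1,2}^\ast<0$. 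In both cases the real parts are negative, which is the assertion.

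I do not anticipate a genuine obstacle here; this is essentially a sign check on the coefficients of a monic real quadratic. The one point requiring a sentence of care is that $\gamma+\mu_1+\mu_2>0$ strictly (not merely $\geq0$), which is needed both to have $b>0$ and to conclude strict negativity; this is justified by the biological meaning of the parameters (maturation rate and mortality rates are positive). A secondary subtlety worth noting explicitly is that $\gamma+\mu_1>0$ as well, so that $c>0$ is strict rather than $c\geq0$; if one wanted only $\Q<1$ without assuming $\mu_1,\gamma>0$ strictly one would get $c\geq0$ and could still invoke Proposition~\ref{prop3} for stability, but for the stated claim about strictly negative real parts the strict positivity is the clean route. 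Thus the proof reduces to: verify $b>0$ and $c>0$, then invoke the two-dimensional Routh--Hurwitz fact (or the explicit formula $(\ref{4.3.5})$ with the discriminant bound), concluding $\operatorname{Re}\lambda_{1,2}^\ast<0$.
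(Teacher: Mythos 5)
Your proof is correct and follows essentially the same route as the paper: the paper also invokes the Routh--Hurwitz criterion for the quadratic $(\ref{4.3.4})$ and verifies that both coefficients $\gamma+\mu_1+\mu_2$ and $\mu_1(\gamma+\mu_1)(1-\Q)$ are positive when $\Q<1$. Your additional explicit discriminant case analysis is a harmless elaboration of the same argument.
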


\begin{proof}
By the Routh--Hurwitz criteria (see \cite{murray}, Appendix B), the real part of the roots of the polinomial (\ref{4.3.4}) are negative if all coefficients of the polinomial are positive. Once $\gamma$, $\mu_1$ and $\mu_2$ are positive, then $\gamma +\mu _1+\mu _2>0$. Since $\Q\in(0,1)$, then $\mu_1(\gamma+\mu_1)(1-Q_0)>0$ and the results follows.
\end{proof}

\begin{lemma}\label{lema2}
If $\Q>1$, then the roots $\lambda_{1,2}^\ast$ $(\ref{4.3.5})$ are non-zero and have opposite signs.
\end{lemma}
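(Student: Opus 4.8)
The plan is to read off the sign information directly from the quadratic $\bar{\text P}(\lambda)=\lambda^2+\lambda(\gamma+\mu_1+\mu_2)-\mu_1(\gamma+\mu_1)(\Q-1)$ in (\ref{4.3.4}), exactly as in Lemma \ref{lema1}, but now exploiting the hypothesis $\Q>1$. First I would observe that under $\Q>1$ the constant term $-\mu_1(\gamma+\mu_1)(\Q-1)$ is strictly negative, since $\gamma,\mu_1,\mu_2>0$. Recalling that for a monic quadratic $\lambda^2+b\lambda+c$ the product of the roots equals $c$, we get $\lambda_1^\ast\lambda_2^\ast=-\mu_1(\gamma+\mu_1)(\Q-1)<0$. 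A product of two numbers being strictly negative forces both to be nonzero and of opposite sign; in particular the roots are real (a negative product rules out a genuinely complex conjugate pair, whose product would be $|\lambda|^2\ge 0$). This is precisely the assertion.

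Alternatively, and perhaps more transparently given the explicit formula (\ref{4.3.5}), I would argue from the discriminant: $(\gamma+\mu_1+\mu_2)^2+4\mu_1(\gamma+\mu_1)(\Q-1)$. When $\Q>1$ the second summand is positive, so the quantity under the square root exceeds $(\gamma+\mu_1+\mu_2)^2>0$; hence $\sqrt{(\gamma+\mu_1+\mu_2)^2+4\mu_1(\gamma+\mu_1)(\Q-1)}>\gamma+\mu_1+\mu_2>0$. Plugging this into (\ref{4.3.5}), the ``$+$'' branch gives $\lambda_1^\ast=\tfrac12[-(\gamma+\mu_1+\mu_2)+(\text{something strictly larger})]>0$, while the ``$-$'' branch gives $\lambda_2^\ast=\tfrac12[-(\gamma+\mu_1+\mu_2)-(\text{positive})]<0$. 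Both are manifestly nonzero, and they have opposite signs, as claimed.

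There is essentially no obstacle here: the statement is a one-line consequence of the sign of the constant term of a monic real quadratic (equivalently, of Vieta's formula for the product of the roots), and the only thing to check is the strict positivity of $\mu_1(\gamma+\mu_1)$, which is immediate from the standing assumption that the mortality and maturation rates are positive. I would present the product-of-roots version as the main argument since it is the shortest, and perhaps remark that it matches the Routh--Hurwitz viewpoint used in Lemma \ref{lema1}: there the positivity of all coefficients gave two roots with negative real part, whereas here the failure of that positivity (a negative constant term) is exactly what produces a saddle-type configuration, which is the dynamical content feeding into the mosquito-invasion analysis.
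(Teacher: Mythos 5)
Your proposal is correct, and your second argument (comparing the square root in (\ref{4.3.5}) with $\gamma+\mu_1+\mu_2$ to conclude that the ``$+$'' branch is positive and the ``$-$'' branch is negative) is exactly the proof given in the paper. Your preferred main argument via Vieta's formula --- the constant term $-\mu_1(\gamma+\mu_1)(\Q-1)$ of the monic quadratic (\ref{4.3.4}) is negative, so the product of the roots is negative, forcing two nonzero real roots of opposite sign --- is an equivalent one-line shortcut and is equally valid.
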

\begin{proof}
If $\Q>1$, then $\sqrt{(\gamma+\mu_1+\mu_2)^2+4\mu_1(\gamma+\mu_1)(\Q-1)}>(\gamma+\mu_1+\mu_2)$. This inequality implies
$$
\ba{l}
      \lambda_1^*=\f{1}{2}\lr[]{-(\gamma+\mu_1+\mu_2)- \sqrt{(\gamma+\mu_1+\mu_2)^2+4\mu_1(\gamma+\mu_1)(\Q-1)}}<-2(\gamma+\mu_1+\mu_2)<0,\\
      \\
      \lambda_2^*=\f{1}{2}\lr[]{-(\gamma+\mu_1+\mu_2)+ \sqrt{(\gamma+\mu_1+\mu_2)^2+4\mu_1(\gamma+\mu_1)(\Q-1)}}>0.
\ea
$$
\end{proof}

\begin{theorem}\label{teo3}
The equilibrium point $\text e_0=(0,0)$ of the system $(\ref{4.3.1})$ is asymptotically stable if $\Q<1$ and unstable if $\Q>1$.
\end{theorem}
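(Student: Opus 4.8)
The plan is to treat the two cases separately and reduce each to one of the three propositions recalled in the Preliminaries, using the two auxiliary lemmas just established. First I would note that the subsystem $(\ref{4.3.1})$ is \emph{linear} in $(u,v)$: its right-hand side is the constant matrix of $(\ref{4.3.3})$ applied to the vector $(u,v)^T$. Consequently that matrix is literally the Jacobian of the vector field at every point, in particular at $\text{e}_0$, so the eigenvalues $\lambda_{1,2}^*$ in $(\ref{4.3.5})$ are exactly the eigenvalues of $\text{J}(\text{e}_0)$. I would also recall the observation made right after $(\ref{4.3.2})$: $\text{e}_0=(0,0)$ genuinely belongs to $E'$ (hence is an equilibrium point) precisely because we are in the regime $\Q\neq1$, which is the case in both halves of the statement.

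For the stability half, assume $\Q<1$. By Lemma $\ref{lema1}$ the real parts of $\lambda_{1,2}^*$ are negative. Since $\text{e}_0$ is an equilibrium point of the (linear, hence smooth) vector field defining $(\ref{4.3.1})$, all hypotheses of Proposition $\ref{prop1}$ are met, and it follows at once that $\text{e}_0$ is asymptotically stable.

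For the instability half, assume $\Q>1$. By Lemma $\ref{lema2}$ the eigenvalues of $\text{J}(\text{e}_0)$ are real, nonzero and of opposite sign; in particular $\lambda_2^*>0$ is an eigenvalue with positive real part. Proposition $\ref{prop2}$ then yields that $\text{e}_0$ is unstable, completing the proof.

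I do not expect a real obstacle here; the only points requiring a line of care are (i) confirming that $\text{e}_0$ is actually an equilibrium in the regime considered — which is the content of the remark following $(\ref{4.3.2})$ and is automatic since $\Q\neq1$ in both cases — and (ii) verifying that Lemmas $\ref{lema1}$ and $\ref{lema2}$ supply exactly the spectral hypotheses demanded by Propositions $\ref{prop1}$ and $\ref{prop2}$, which is immediate. An alternative, lemma-free route would invoke the Routh--Hurwitz criterion directly on the characteristic polynomial $\bar{\text{P}}(\lambda)$ of $(\ref{4.3.4})$ (positive constant term iff $\Q<1$, sign change iff $\Q>1$), but since the lemmas are already available the argument above is the shortest.
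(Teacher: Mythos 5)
Your proof is correct and follows essentially the same route as the paper: Lemma \ref{lema1} plus Proposition \ref{prop1} for the case $\Q<1$, and Lemma \ref{lema2} plus Proposition \ref{prop2} for the case $\Q>1$. The extra observations you add (linearity of $(\ref{4.3.1})$, so that $(\ref{4.3.3})$ is the Jacobian at every point, and the check that $\text e_0$ is an equilibrium when $\Q\neq1$) are harmless elaborations of what the paper leaves implicit.
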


\begin{proof}
If $\Q<1$, it follows from Lemma \ref{lema1} that all eigenvalues (\ref{4.3.3}) have negative real part and the asymptotic stability follows from Proposition \ref{prop1}.

On the other hand, if $\Q>1$, by Lemma \ref{lema2} we have $\lambda_2^\ast>0$, which implies that (\ref{4.3.3}) has an eigenvalue with positive real part. From Proposition \ref{prop2}, $\text e_0$ is an unstable point of (\ref{4.3.1}).
\end{proof}

\begin{theorem}\label{teo4}
Let $\text e\in E'$, where $E'$ is given by $(\ref{4.3.2})$. Then $\text e$ is an stable equilibrium point of the system  $(\ref{4.3.1})$.
\end{theorem}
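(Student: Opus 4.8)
The plan is to analyze the case $\mathcal{Q}_0 = 1$ (the only situation in which a point $\text e \in E'$ distinct from $\text e_0$ exists, as noted right after $(\ref{4.3.2})$) and to apply Proposition \ref{prop3}, since the subsystem $(\ref{4.3.1})$ is linear in $(u,v)$ and hence its own Jacobian. First I would observe that when $\mathcal{Q}_0 = 1$, the constant term $-\mu_1(\gamma+\mu_1)(\mathcal{Q}_0 - 1)$ of the characteristic polynomial $\bar{\text P}(\lambda)$ in $(\ref{4.3.4})$ vanishes, so that $\bar{\text P}(\lambda) = \lambda^2 + \lambda(\gamma+\mu_1+\mu_2) = \lambda\bigl(\lambda + (\gamma+\mu_1+\mu_2)\bigr)$. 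Consequently the eigenvalues are $\lambda_1^* = 0$ and $\lambda_2^* = -(\gamma+\mu_1+\mu_2) < 0$, the latter being strictly negative because $\gamma,\mu_1,\mu_2$ are positive parameters.

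Next I would note that the system $(\ref{4.3.1})$ is linear with constant coefficients and that every $\text e \in E'$ is an equilibrium point — this is exactly what $(\ref{4.3.2})$ records, and it follows directly since $A\,\text e = 0$ for the matrix $A$ (restricted to the $(u,v)$-block) when $\det A = 1 - \mathcal{Q}_0 = 0$. Since $f$ is linear, $\text e$ is an equilibrium, and all eigenvalues of $\text J(\text e)$ have real part $\leq 0$ (one is $0$, the other is negative), Proposition \ref{prop3} applies verbatim and yields that $\text e$ is a stable equilibrium point of $(\ref{4.3.1})$. This completes the argument.

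I do not anticipate a genuine obstacle here: the only subtlety is the logical point that $E' \setminus \{\text e_0\}$ is nonempty precisely when $\mathcal{Q}_0 = 1$, so the hypothesis "$\text e \in E'$" with $\text e \neq \text e_0$ silently forces $\mathcal{Q}_0 = 1$, which is what makes the constant term of $\bar{\text P}(\lambda)$ drop out; one should state this explicitly rather than leave it implicit. The role of Proposition \ref{prop3} (as opposed to Proposition \ref{prop1}) is essential, because the zero eigenvalue prevents asymptotic stability, so linearity of the vector field is what rescues plain stability. If one wanted an independent check, one could note that the $0$-eigenvalue corresponds to the eigendirection $(\gamma/(k\mu_1), 1)$ along which $E'$ itself lies, so trajectories started near $\text e$ are simply translated along this line of equilibria and stay near it — but invoking Proposition \ref{prop3} is cleaner and is the approach I would write up.
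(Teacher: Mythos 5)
Your proposal is correct and follows essentially the same route as the paper's proof: at $\mathcal{Q}_0=1$ the characteristic polynomial factors as $\lambda(\lambda+(\gamma+\mu_1+\mu_2))$, giving eigenvalues $0$ and $-(\gamma+\mu_1+\mu_2)<0$, and stability then follows from Proposition \ref{prop3} by linearity. Your added remarks (that $E'\setminus\{\text e_0\}\neq\emptyset$ forces $\mathcal{Q}_0=1$, and the geometric interpretation of the zero eigendirection along the line of equilibria) are correct elaborations of points the paper leaves implicit.
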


\begin{proof}
If $\Q=1$, then the eigenvalues $\lambda_{1,2}^*$ of (\ref{4.3.3}) are given by $\lambda_{1}^*=0 $ and $\lambda_{2}^*=-(\gamma+\mu_1+\mu_2)<0$. Then the result follows from Proposition \ref{prop3}.
\end{proof}

{ \tiny
\begin{figure}[!h]
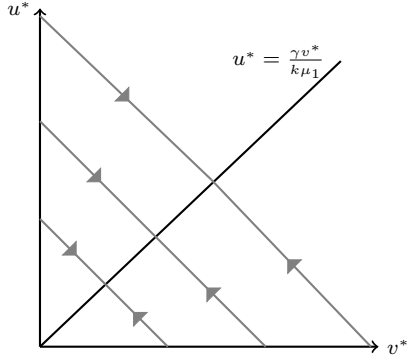

  \centering
 \figEum
 \caption{\scriptsize An illustration of the stability in the bidimensional case for $\Q=1$ (Theorem \ref{teo4}).}
 \label{fig:e1}
\end{figure}}

\section{Wave speed during the spatial mosquitoes' invasion}\label{speed}

Here we determine the wave speed during the spatial invasion of mosquitoes, which we would like to compare with analogous results obtained in \cite{maidana2008}. For this reason, in addition to the hypothesis $\epsilon=0$, we shall not consider nonlinearities in the diffusion as well as in the advection, that is, we consider $p=q_1=q_2=0$ in (\ref{2.3.4}). It will be of great importance in our analysis the basic offspring number (\ref{4.1}) and the basic reproduction rate (\ref{4.2}). In dimensional variables, they are given by

\bb\label{numbers}
\Q=\f{\bar{\gamma}}{\bar{\gamma}+\bar{\mu}_2}\times\f{\bar{r}_0}{\bar{\mu}_1},\quad\Ro=\f{\bar{\be}_1 Nh^\ast}{\bar{\mu}_1}\times\f{\bar{\beta}_2k_1u^\ast}{\bar{\sigma}}.
\ee

From the hypothesis on $p$, $q_1$, $q_2$ and $\epsilon$, and the fact that we shall determine the wave speed during the mosquitoes' invasion, it is sufficient to study system (\ref{3.4.4}) with these conditions. 

\subsection{Equilibrium points of system (\ref{3.4.4}) with $p=q_1=q_2=\epsilon=0$}

Defining auxiliary functions $\Psi_1(z):=\Phi_1'(z)$, $\Psi_2(z):=\Phi_2(z)$, system (\ref{3.4.4}) can be transformed into
\bb\label{6.1}
\left\{\ba{l}
    \Phi_{1}^\prime=\Psi_1,\\
    
    \ds{\Psi_{1}^\prime=
      (2\nu-c)\Psi_1
      -\frac{\gamma}{k}\Phi_{3}
      +\mu_1\Phi_1    +\beta_1\Phi_1\Phi_5},\\
      
    \Phi_2^\prime=\Psi_2,\\
    
    \Psi_2^\prime=
      (2\nu-c)\Psi_2
      +\mu_1\Phi_2    -\beta_1\Phi_1\Phi_5\\
      
    \ds{\Phi_3^\prime=
         -\f{k}{c}(\Phi_{1}+\Phi_{2})+\f{(\mu_2+\gamma)}{c}\Phi_{3}},\\
         
    \ds{\Phi_4^\prime=
        \f{\beta_2}{c}\,\Phi_2\Phi_4},\\
        
   \ds{ \Phi_5^\prime=
       -\f{\beta_2}{c}\,\Phi_2\Phi_4  +\f{\sigma}{c}\Phi_5},\\
       
    \ds{\Phi_6^\prime=\,\,\,\,\,-
        \f{\sigma}{c}\Phi_5.}
\ea\right.
\ee

Recalling that $h+I+r=1$, the set of equilibrium points of (\ref{6.1}) is given by 
\bb\label{6.2}
\hat{\text{E}}_0=\{(\Phi_1^*, \Psi_1^*, \Phi_2^*, \Psi_2^*, \Phi_3^*, \Phi_4^*, \Phi_5^*, \bar \Phi_6^*)=(0,0,0,0,0,h^\ast,0,1-h^\ast),\quad h^\ast\in[0,1]\}
\ee
if $\Q\neq1$, and by
\bb\label{6.3}
\hat{\text{E}}_1=\left\{\left(\Phi_1^*, \Psi_1^*, \Phi_2^*, \Psi_2^*, \Phi_3^*, \Phi_4^*, \Phi_5^*, \bar \Phi_6^*\right)=\left(0,\f{v^*\gamma}{k\mu_1},0,0,v^\ast,h^\ast,0,1-h^\ast\right),\,\, v^\ast>0,\,\, h^\ast\in[0,1]\right\}
\ee
provided that $\Q=1$.

The Jacobian associated to (\ref{6.1}) at a point $\hat{p}\in\hat{\text{E}}_0$ and a point $\hat{q}\in\hat{\text{E}}_1$ are, respectively,
\bb\label{6.4}
    \text J(\hat{p})=
    \left[
      \begin{array}{cccccccc}
        0 & 1 & 0 & 0 & 0 & 0 & 0 & 0 \\
        \mu _1 & 2 \nu-c  & 0 & 0 & -\gamma/k & 0 & 0 & 0 \\
        0 & 0 & 0 & 1 & 0 & 0 & 0 & 0 \\
        0 & 0 & \mu _1 & 2 \nu-c  & 0 & 0 & 0 & 0 \\
        k/c & 0 & k/c & 0 & -(\gamma +\mu _2)/c & 0 & 0 & 0 \\
        0 & 0 & -(\beta _2 h^*)/c & 0 & 0 & 0 & 0 & 0 \\
        0 & 0 & (\beta _2 h^*)/c & 0 & 0 & 0 & \sigma/c & 0 \\
        0 & 0 & 0 & 0 & 0 & 0 & \sigma/c & 0 \\
      \end{array}\right]
\ee
and
\bb\label{6.5}
    \text J(\hat{q})=
    \left[
      \begin{array}{cccccccc}
        0 & 1 & 0 & 0 & 0 & 0 & 0 & 0 \\
        \mu _1 & 2 \nu-c  & 0 & 0 & -\gamma/k & 0 & (\gamma  \beta _1 v^*)/k \mu _1 & 0 \\
        0 & 0 & 0 & 1 & 0 & 0 & 0 & 0 \\
        0 & 0 & \mu _1 & 2 \nu-c  & 0 & 0 & -(\gamma  \beta _1 v^*)/k \mu _1 & 0 \\
        k/c & 0 & k/c & 0 & -(\gamma +\mu _2)/c & 0 & 0 & 0 \\
        0 & 0 & -\beta _2 h^*/c & 0 & 0 & 0 & 0 & 0 \\
        0 & 0 & \beta _2 h^*/c & 0 & 0 & 0 & \sigma/c & 0 \\
        0 & 0 & 0 & 0 & 0 & 0 & \sigma/c & 0 \\
      \end{array}
    \right].
\ee

\subsection{Method for determining the wave speed}

Our procedure for determining  the wave speed follows closely that employed in \cite{maidana2008,maidana2005} for determining $c$: let $J$ be (\ref{6.4}) or (\ref{6.5}). Denoting by $\text P(\lambda,c)$ the corresponding characteristic polynomial, we determine the critical points of $\text P(\cdot,c)$. Since both (\ref{6.4}) and (\ref{6.5}) have $2$ columns with all entries $0$ it follows that the characteristic polynomial can be factored into two polynomials of degree three. In order to assure a third order degree polynomial $ \text P(\lambda,c)$ has only real roots, one must impose that at least one of the roots of $ \text P(\lambda,c)$ is negative and the constant $c$ must be chosen such that $ \text P(\lambda,c)$ has at least one positive root.

So, in order to achieve the aforementioned requirements, one should impose the following conditions:
\bb\label{6.6}
    \text P(0,c)>0,\qquad\qquad\lim\limits_{\lambda\rightarrow\pm\infty}  \text P(\lambda,c)=\pm\infty.
\ee

To assure the existence of $c$ such that $ \text P(\lambda,c)$ has at least one positive real root, one must impose that for a fixed $c> \text C_{\text{min}}$, the following conditions holds
\bb\label{6.7}
    \lambda_+>0,\qquad\qquad\left.\f{\text d\text P(\lambda,c)}{\text d\lambda}\right|_{\lambda=0}<0,
\ee
where 
\bb\label{6.8}
\lambda_+=\max\left\{\lambda:\ \f{\p\,\text P(\lambda,c)}{\p \lambda}=0\right\},\,\,
 \,\,
 \text C_{\text{min}}=\left\{c:\ \text P(\lambda_{\text{+}},c)=0\right\}.
\ee

\subsection{Wave speed for mosquitoes' invasion}

The eigenvalues of (\ref{6.4}) are $\lambda_1=\lambda_2=0$, $\lambda_3=-\sigma/c$ and roots of the following polynomials:
\bb\label{6.2.1}
\ba{l}
\hat{\text P}_0(\lambda,c)=\lambda ^2-\lambda (2 \nu-c )-\mu _1,\\
\\
\hat{\text P}_1(\lambda,c)=\ds{\lambda ^3
     + \lambda ^2\lr(){{(c-2 \nu) +\f{\gamma +\mu _2}{c}}}
     - \lambda   \lr(){\frac{(2 \nu -c)\left(\gamma +\mu _2\right)}{c}+\mu _1}
     + \frac{\mu _1 \left(\gamma +\mu _2\right)(Q_0-1)}{c}},
     \ea
\ee
where $Q_0$ is given by (\ref{4.1}).

Once the discriminant of $\hat{\text P}_0(\lambda,c)$ in (\ref{6.2.1}) satisfies $(2 \nu-c )^2+4\mu_1>0$ for any values of $c$, $\nu$ and $\mu_1$, this implies that its roots are always real numbers.

With respect to $\hat{\text P}_1(\lambda,c)$ in (\ref{6.2.1}), assuming that $\Q>1$, a condition already obtained  in the previous section in order to describe the mosquitoes' invasion, we have $\hat{\text P}_1(0,c)=[\mu _1 \left(\gamma +\mu _2\right)(Q_0-1)]/c>0$ for any $c>0$. Moreover, we have $\hat{\text P}_1(\lambda,c)\rightarrow\pm\infty$ when $\lambda\rightarrow\pm\infty$.

It follows from (\ref{6.2.1}) that the critical points of $\hat{\text P}_1(\cdot,c)$ are
\[\lambda_\pm=\f{1}{3}\lr[]{-\lr(){-(2 \nu-c) +\f{\gamma +\mu _2}{c}}\pm
              \sqrt{\lr(){-(2 \nu-c) +\f{\gamma +\mu _2}{c}}^2+3\lr(){\frac{\left(\gamma +\mu _2\right) (2 \nu-c)}{c}+\mu _1}}},\]
              
It is reasonable to assume that the traveling wave should prevail on the wind if it is opposite to it, that is, one should impose that  $2\nu-c>0$, from which we conclude that $\lambda_+>0$ and
$$\lr.|{\f{\text d \hat{\text{P}}_1(\lambda,c)}{\text d\lambda}}_{\lambda=0}=-\lr(){\f{\left(\gamma +\mu _2\right) (2 \nu -c)}{c}+\mu _1}<0.$$
The polynomial $\hat{\text{P}}_1$ satisfies the conditions of the method employed to find out the wave speed. Hence, we use this polynomial to find the wave speed of mosquitoes' invasion. The procedure to obtain the minimum wave speed is illustrated in Figure \ref{fig2}.

\begin{table}[!h]\centering
 \scalefont{0.8} \caption{\scalefont{0.9} Biological parameters used in the simulations for finding the speed velocity of the wave. These values are taken from the reference \cite{maidana2008}. The last 6 parameters are taken considering a temperature at 15\Celsius\, and 30\Celsius. The values used for calculating the basic offspring number and the basic reproduction rate (see (\ref{numbers})) are $u^*=v^*\gamma/(k\mu_1)$, $v^*=0.7$ and $h^*=1$.}
    \begin{tabular}[!h]{|l|c|l|} \hline
      {Parameter} & {Value} \\\hline
    Diffusion coefficient $\bar D$ & 1.25 $\times$ 10\tSup{-2} km\tSup2/day\\
    Advection coefficient  $2\bar \nu$&5 $\times$ 10\tSup{-2} km/day\\
    Carrying capacity (winged) $k_1$& 25 individuals/km\tSup2\\
    Carrying capacity (aquatic)  $k_2$& 100 individuals/km\tSup2\\
     Transmission coefficient (humans$\rightarrow$ mosquitoes) $\bar{\beta}_1$& 0.0033 { km\tSup{2}/day}\\
    Transmission coefficients  (mosquitoes$\rightarrow$ humans)$\bar{\beta}_2$& 0.0025 { km\tSup{2}/day}\\
     Period of infection $\phantom{^{-1}}(\bar{\sigma})^{-1}$ &  {7 days}\\
     Number of humans $\bar{N}$ & 150 { individuals/km\tSup{2}}\\
     Rate of oviposition $\bar r_0$         & 1.52 (15\Celsius)/10  (30\Celsius) days$^{-1}$\\
     Average time in the aquatic form $\phantom{^{-1}}(\bar\gamma)^{-1}$  & 52.63 (15\Celsius)/5 (30\Celsius) days\\
     Average lifetime in the winged form $\phantom{^{-1}}(\bar\mu_1)^{-1}$    & 26.3 (15\Celsius)/ 35 (30\Celsius) days\\
     Average lifetime in the aquatic form $\phantom{^{-1}}(\bar\mu_2)^{-1}$    & 50 (15\Celsius)/18 (30\Celsius) days\\
     Basic offspring number $Q_0$                                                & 19.45 (15\Celsius)/273.91 (30\Celsius)\\
     Basic reproduction rate $\Ro$                                                 & 7.97 (15\Celsius)/148.46 (30\Celsius)\\\hline
  \end{tabular}
  \label{tab3}
\end{table}

Let $p\in \hat{\text{E}}_0$ (see Eq. (\ref{6.2})). From the values given in Table \ref{tab3} and considering a wind current with velocity $2\bar\nu\,=\,18.25\, km/year$ we obtained the speed $\bar c=\, 89.67 \,km/year$. In the case of absence of wind 
$2\bar\nu=0$, it is obtained $\bar c\,=\,75.46\, km/year$.

\begin{figure}[!ht]
  \centering
  \includegraphics[scale=0.70]{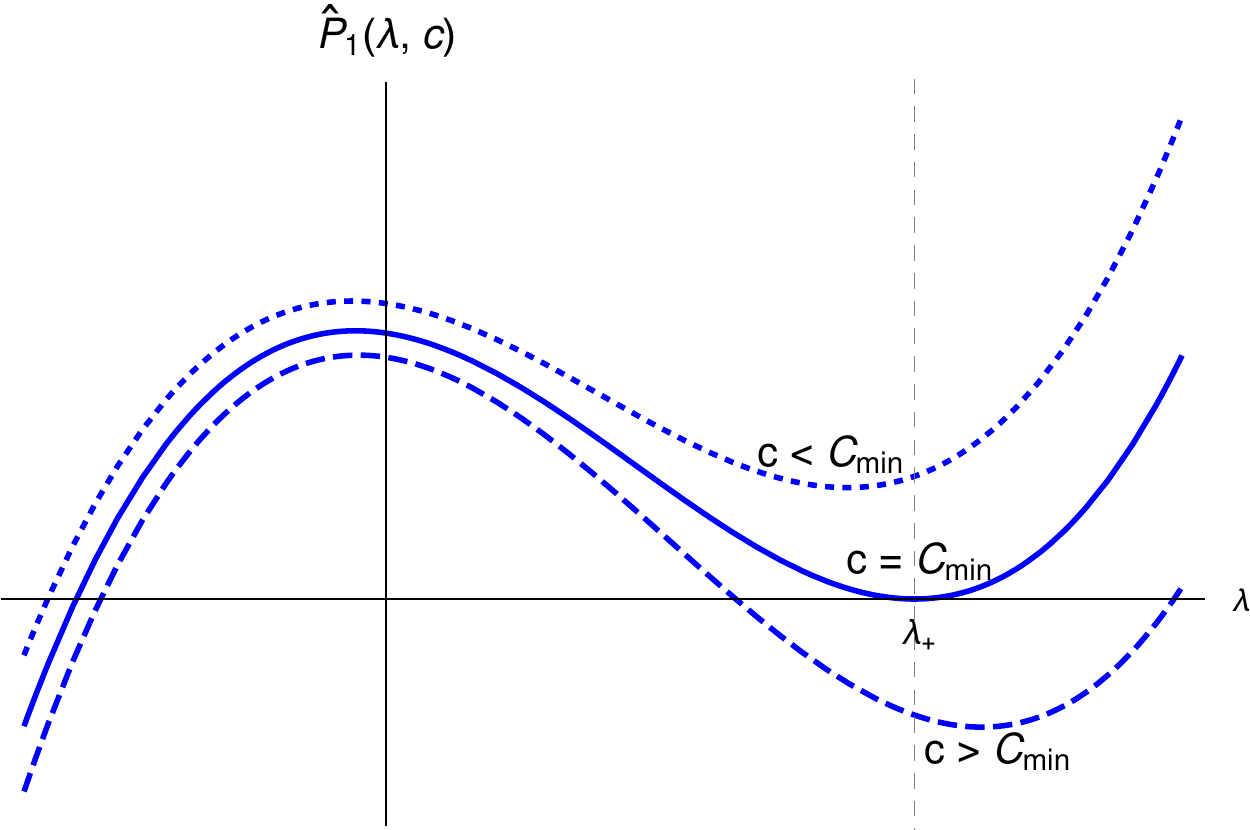}
  \caption{\scalefont{0.9}The figure shows the result of the procedure above for obtaining the minimum wave speed for the wave. Using the values of Table \ref{tab3} it was found the non-dimensional value $\text C_{\text{min}}=0.69$ for the polynomial $\hat{\text P}_1(\lambda,c)$ given in (\ref{6.2.1}), corresponding to an equilibrium point $\hat{p}\in \hat{\text{E}}_0$ (\ref{6.2}).}
  \label{fig2}
\end{figure}

\subsection{Wave propagation for $\Q=1$ and $\Ro>1$: dengue's dispersion}

Let $\hat{q}\in\hat{\text{E}}_1$. The eigenvalues of the matrix (\ref{6.5}) are $\lambda_1=\lambda_2=\lambda_3=0$ and the roots of the polynomials
\bb\label{5.4.1}
\ba{l}
\ds{\hat{\text P}_2(\lambda,c)=\lambda ^2
      +\lambda\left(-(2 \nu-c)+\f{\gamma+\mu_2}{c}\right)
      -\left(\f{(2 \nu-c)(\gamma+\mu_2)}{c} + \mu _1\right)},\\
      \\
\ds{\hat{\text P}_3(\lambda,c)=\lambda ^3
      +\lambda ^2 \left(-(2 \nu-c)+\f{\sigma }{c}\right)
      -\lambda\lr(){\f{\sigma}{c}(2\nu-c)+\mu _1}
      +\f{\sigma\mu_1(\Ro-1)}{c}.}
\ea
\ee
Proceeding as in the last subsection, we intend to determine restrictions on the biological parameters involved in (\ref{numbers}) in order to have $\Q=1$ and $\Ro>1$.

Taking (\ref{numbers}) into account and since $u^*=v^*\gamma/(k\mu_1)$, we have
\bb\label{5.4.2}
\Ro=\f{\bar{\be}_1 Nh^\ast}{\bar{\mu}_1}\times\f{\bar{\beta}_2}{\bar{\sigma}}\times\f{\bar{v}^\ast\bar{\gamma}}{\bar{\mu}_1}.
\ee

According to Table \ref{tab3}, the lowest possible value to $\Q$ is achieved at 15\Celsius. Even for this choice, maintained the values of the biological parameters, $\Q$ is significantly greater than 1. A natural way for decreasing $\Q$ without affecting the basic reproduction rates (\ref{5.4.2}) would be increasing the value of $\bar{\mu}_2$. Then, considering the values of $\bar{\gamma}^{-1}$, $\bar{\mu}_1^{-1}$ and $\bar{r}_0$ given on Table \ref{tab3} and imposing that $\Q=1$, one finds 
\bb\label{5.4.3}
\bar{\mu}_2=0.74.
\ee 

Fixing $v^\ast=0.7$ and $h^\ast=1$, if $\bar{\nu}=0$ we would obtain $\bar{c}=24.08\,km/year$, while if $2\bar{\nu}=18.25\,km/year$, we would get $\bar{c}=38.72\,km/year$.

Figure \ref{fig3} shows the distribution of velocities $c_{min}$ as a function of $v^\ast$. We can observe an increasing of the wave speed when the densities of mosquitoes population increases.

\begin{figure}[!htb]
\centering
\includegraphics[scale=0.70]{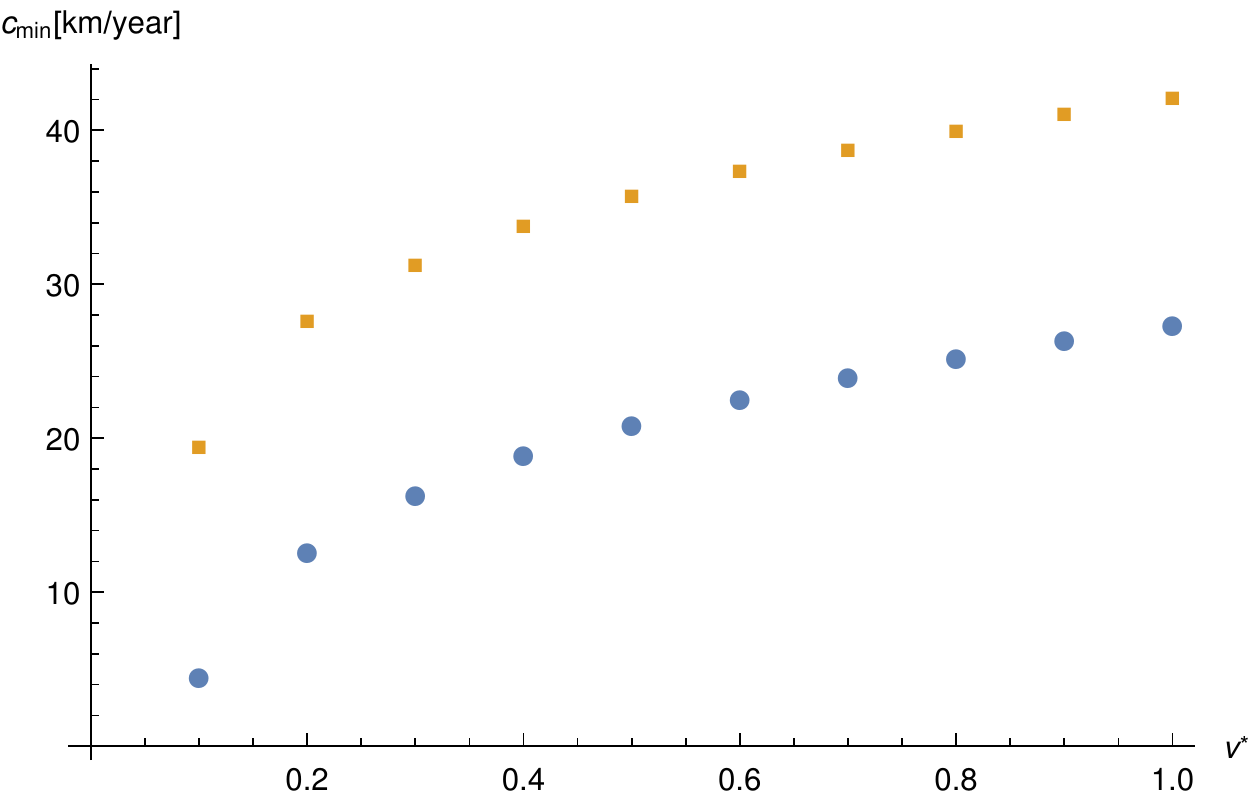}
\caption{\scalefont{0.9}Values of $c_{min}$ depending on $v^\ast$. The points represented by circles are obtained considering $\bar{\nu}=0$, while those represented by squares are calculated using $2\bar{\nu}=18.25\,km/year$. The values of $v^\ast$ are taken equally distributed from 0.1 to 1.0. The corresponding values of $c_{min}$ are 4.6026, 12.7056, 16.4107, 18.9825, 20.9895, 22.6519, 24.0797, 25.3367, 26.463, 27.4859, with $\bar{\nu}=0$. If $2\bar{\nu}=18.25\,km/year$, then the values of $c_{min}$ are 19.3765, 27.5711, 31.2148, 33.7331, 35.6967, 37.3231, 38.7206, 39.9514, 41.055, 42.0577.}
\label{fig3}
\end{figure}
\section{Discussion and Conclusion}\label{conclusion}

In this paper we derived two Malthusian models for analysing the transmission of dengue between humans and mosquitoes. These models can be viewed as members of the system (\ref{2.3.4}), and some Lie symmetries are listed on Table \ref{tab2}. 

Our results on symmetry analysis show that the transmission coefficients from human to mosquitoes $\bar{\be}_1$ and mosquitoes to humans $\bar{\be}_2$ and the wind current $2\bar{\nu}$, are quite relevant in the manifestation of symmetries other than the translations.

With respect to the power nonlinearities, the most dominant from the point of view of symmetries is $p$. The powers $q_1$ and $q_2$ are relatively important while $\nu$, that is related to the existence of wind currents, seems to be more relevant than $q_1$ and $q_2$. The most important biological parameters in this analysis are $\be_1$ and $\be_2$, which are related to the transmission between humans to mosquitoes and vice-versa.

The temporal dynamics of system (\ref{4.1.1}) shows the existence of regions of instability. They are given by the sets (\ref{4.1.5}) and (\ref{4.1.6}) provided that $\Q>1$ or $\Q=1$ and $\Ro>1$, respectively, as proved in theorems \ref{teo1} and \ref{teo2}. The dynamics of the mosquitoes population is also analysed. For this case Theorem \ref{teo3} shows that the origin, provided that $\Q<1$, is a point of asymptotic stability in the bidimensional space $(u,v)$, corresponding to the winged and aquatic forms of the mosquitoes. For $\Q>1$, the origin is unstable. Apart from the origin, in the bifurcation case $\Q=1$, all points of the set given in $(\ref{4.3.2})$ are stable, as proved in Theorem \ref{teo4}.

The  condition $\Q < 1$ leads to the eradication of the mosquitoes' population. On the other hand, $\Q  > 1$ means the invasion of the mosquitoes' population and, since the model is Malthusian, the growth of the population is unlimited. The value $\Q=1$ corresponds to a bifurcation value and a region of nontrivial points for the mosquitoes density is possible. In this case, when $\Ro>1$, the propagation of dengue disease could be possible. 

A case of biological relevance occurs when $\bar{\be}_1\bar{\be}_2\bar{\sigma}\neq0$. Under these restrictions, we have transmission of the virus among all populations and we also have recovering among humans. Using the principal Lie Algebra $L_{\cal P}$ we construct the invariant $z=x-ct$ and transforms system (\ref{2.3.3}), with $p=q_1=q_2=\mu_3=0$, into the system of second order ODEs (\ref{3.4.4}). For its own turn, this system can be transformed into a system of first order ODEs given by (\ref{6.1}). From the analysis of the linear part of the system (\ref{6.1}) and using the data from \cite{maidana2008} we determined that the wave of minimum speed has velocity $c=89.67\,km/year$, considering the biological data at 30\Celsius\, and a wind current with velocity of $18.25\,km/year$. This is the same result obtained in \cite{maidana2008}. The value of the wave speed can reach to $75.46\,km/year$, in the absence of wind currents. Again, a result in agreement with that obtained previously in \cite{maidana2008}.

On the other hand, if $\Q=1$ and $\Ro>1$, we obtain the speed of the spatial dispersion of the dengue as a function of the mosquitoes density as shown in Figure \ref{fig3}. However, in order to have this situation we should have a mortality rate given by (\ref{5.4.3}), which seems to be unrealistic. Biologically speaking, the situation $\Q=1$ would correspond to a high mosquitoes mortality. From mathematical viewpoint, condition $\Q=1$ implies bifurcation points, which brings changes in the stability of the system (\ref{6.1}) and hence, hardly describes a real situation.
 
\section*{Acknowledgements}

The authors are grateful to FAPESP, grant nº 2014/05024-8, for financial support. F. Bacani is thankful to CNPq, grant nº 141081/2014-7, for the scholarship provided. I. L. Freire is grateful to CNPq for the grant nº  308941/2013-6. M. Torrisi has been supported by Gruppo Nazionale per la Fisica Matematica of
Instituto Nazionale di Alta Matematica (Italy)

\end{document}